\newcommand{\PSL}{{\rm{PSL}}}
\tikzset{snake it/.style={decorate, decoration=snake}}
\newtheorem{theorem}{Theorem}[section]
  \newtheorem{proposition}[theorem]{Proposition}
            \newtheorem{remark}[theorem]{Remark}
    \newtheorem{defn}{Definition}
  \newtheorem{corollary}[theorem]{Corollary}
  \newtheorem{lemma}[theorem]{Lemma}
  \newtheorem{question}[theorem]{Question}
\newcommand{\rank}{{\rm{rank}}}
\newcommand{\esssys}{{\rm{esssys}}}
\newcommand{\combsys}{{\rm combsys}}
\newcommand{\Tet}{{\rm {\frak Tet}}}
\newcommand{\CAT}{{\rm CAT}}
\newcommand{\sss}{\hspace{-.05cm} \smallsetminus \hspace{-.05cm}}
\definecolor{Green}{RGB}{0,180,60}
\definecolor{Purple}{RGB}{150,0,250}
\title{Filling links and essential systole}
\author{Christopher J Leininger}\thanks{CJL was supported by NSF grant DMS-2305286}
\address{Rice University}
\email{cjl12@rice.edu}
\urladdr{\url{https://sites.google.com/view/chris-leiningers-webpage/home}}
\author{Yandi Wu}
\address{Rice University}
\email{yw220@rice.edu}
\urladdr{\url{https://yandiwu.github.io/}}
\begin{document}

\maketitle

\begin{abstract}
    We answer a question of Freedman and Krushkal, producing {\em filling links} in any closed, orientable $3$--manifold.  The links we construct are hyperbolic, and have large {\em essential systole}, contrasting earlier geometric constraints on hyperbolic links in $3$--manifolds due to Adams--Reid and Lakeland--Leininger.
\end{abstract}

\section{Introduction}

Given a closed $3$--manifold, $M$, a {\em $1$--spine} for $M$ (also called a {\em carrier graph}) is a graph $\Gamma$ with $\rank(\pi_1(\Gamma)) = \rank(\pi_1(M))$ and map $f \colon \Gamma \to M$ such that $f_* \colon \pi_1(\Gamma) \to \pi_1(M)$ is surjective.
Given a link $L \subset M$, let $i \colon M \sss L \to M$ be the inclusion.  We say that $f \colon \Gamma \to M \sss L$ is an $L$--relative $1$--spine if $i \circ f \colon \Gamma \to M$ is a $1$--spine.
We say that $L$ is a {\em filling link} if for every $L$--relative $1$--spine $f \colon \Gamma \to M \sss L$, we have
\[ f_*\colon \pi_1(\Gamma) \to \pi_1(M \sss L) \]
is injective.  This paper is motivated by the following.
\begin{question}[Freedman-Krushkal \cite{FreeKrus2023}] \label{Q:FK} Does every closed $3$--manifold contain a filling link?
\end{question}
In the appendix to \cite{FreeKrus2023}, the first author and Reid answered this question affirmatively for a closed, orientable $3$--manifold $M$ for which the rank of $\pi_1(M)$ is $2$. Following this, Stagner proved that the answer is also yes when $\pi_1(M)$ has rank $3$ \cite{stagner}. 
In this paper, we answer the question affirmatively for an arbitrary closed, orientable $3$--manifold.
\begin{theorem} \label{thm:main}
    Every closed, orientable $3$--manifold $M$ contains a filling link.
\end{theorem}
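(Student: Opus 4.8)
The plan is to reduce Theorem~\ref{thm:main} to a geometric criterion and then construct links satisfying it. Write $n=\rank(\pi_1(M))$. The criterion I aim to establish is that a hyperbolic link $L\subset M$ is filling provided $\esssys(M\sss L)>\epsilon(n)$ for a suitable $\epsilon(n)$ depending only on $n$. Granting this, Theorem~\ref{thm:main} reduces to producing, in every closed orientable $M$, a hyperbolic link whose essential systole exceeds the threshold --- a link that is therefore geometrically atypical, in contrast with the upper bounds on essential systole for standard families of links proved by Adams--Reid and Lakeland--Leininger. For $n\le 3$ the criterion also recovers the results of Leininger--Reid in \cite{FreeKrus2023} and of Stagner \cite{stagner}.

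To prove the criterion, suppose $f\colon\Gamma\to N:=M\sss L$ is an $L$--relative $1$--spine with $f_*$ not injective, and set $H:=f_*(\pi_1(\Gamma))\le\pi_1(N)$. The algebraic heart is that $H$ is finitely generated and \emph{not free}: it is generated by $n$ elements, so if it were free of rank $<n$ then $i_*(H)=(i\circ f)_*(\pi_1(\Gamma))=\pi_1(M)$ would need fewer than $n=\rank(\pi_1(M))$ generators, while if it were free of rank $n$ then the surjection $f_*\colon F_n\twoheadrightarrow H\cong F_n$ would be an isomorphism by Hopficity, contradicting non-injectivity. Now pass to the cover $\widehat N\to N$ with $\pi_1(\widehat N)=H$; by the Tameness Theorem it has a compact core $C$, which is irreducible with non-free fundamental group, hence is not a handlebody, so $\partial C$ has an incompressible component $F$. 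If $g(F)\ge 2$, homotope $F$ into $\widehat N$ as a pleated surface of area $\le 2\pi(2g(F)-2)$; it contains a closed geodesic of length bounded in terms of $g(F)$, which maps into $N$ to give an essential closed geodesic of no greater length, so $\esssys(N)$ is small once $g(F)$ is bounded in terms of $n$. To get the latter I would put $f$ in a Stallings folded normal form, so that $\ker(f_*)\ne 1$ is exhibited by a bounded amount of folding, giving $H$ a presentation --- hence $C$ a topology --- of complexity bounded in $n$. The remaining case $g(F)=1$ forces $F$ to be cusp-parallel in $\widehat N$, so that $H$ meets a peripheral $\mathbb{Z}^2$ of $N$ in a finite-index subgroup; this I would exclude using that the constructed $L$ has every component null-homotopic in $M$ and with long meridian slope, which renders such an $H$ incompatible with $i_*(H)=\pi_1(M)$ together with $\esssys(M\sss L)>\epsilon(n)$. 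In every case we contradict the hypothesis, so $f_*$ is injective and $L$ is filling.

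For the construction I would realize $M\sss L$ as a large-systole finite-volume hyperbolic manifold: beginning from a surgery presentation of $M$ on a hyperbolic link in $S^3$ (Lickorish--Wallace together with Myers's theorem) one obtains an initial hyperbolic link $L_0\subset M$; then, as long as $M\sss L_0$ has closed geodesics shorter than the threshold, drill them out (the complement stays hyperbolic and, the drilled geodesics being short, the drilling theorem keeps the surviving geodesics from shrinking by more than a controlled amount), iterating until the only short curves are cusp-parallel, and finally adjust the link so its components are null-homotopic in $M$ with long meridians. The result is a hyperbolic link $L\subset M$ with $\esssys(M\sss L)>\epsilon(n)$, which by the criterion is filling; as $M$ was arbitrary this proves Theorem~\ref{thm:main}.

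The step I expect to be the main obstacle is this construction: forcing the essential systole of a link complement above a threshold that itself grows with $n$, \emph{inside a prescribed $M$}. Arithmetic congruence covers supply large-systole manifolds but not with a controlled Dehn filling back onto $M$, so the largeness must be engineered by surgery while the filling is held fixed, and one must check that the drilling estimates interact correctly with $\epsilon(n)$. Within the criterion, the delicate point is the torus case $g(F)=1$, where no closed geodesic is produced directly and the prescribed cusp geometry of $L$ must be played against $i_*(H)=\pi_1(M)$; the rest --- tameness, pleated surfaces, Stallings folding, and a little care with accidental parabolics --- is standard toolkit, modulo making the bound on $g(F)$, and hence $\epsilon(n)$, fully effective in $n$.
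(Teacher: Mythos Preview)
Your overall architecture---reduce to ``large $\esssys$ implies filling'' and then construct links with large $\esssys$---matches the paper's, but both halves of your argument have genuine gaps. For the criterion, the short curve you extract from the pleated incompressible boundary surface $F$ is essential in $N=M\sss L$ (since $F$ is incompressible), but nothing forces it to be non-null-homotopic in $M$; the essential systole only constrains lengths of $M$--essential loops, so this produces no contradiction. The paper closes this gap by invoking Kapovich--Weidmann: either the $n$--tuple $(g_1,\ldots,g_n)$ generating $H=f_*(\pi_1(\Gamma))$ is already a free basis, or it is Nielsen equivalent to a tuple whose first entry $g_1'$ has translation length $<C(n)$. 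The crucial feature is that $g_1'$ is a \emph{member of a generating set}: if $\esssys(L)>C(n)$ then $i_*(g_1')=1$ in $\pi_1(M)$, whence $\pi_1(M)=i_*(H)$ is generated by the remaining $n-1$ elements, contradicting $\rank(\pi_1(M))=n$. Your Bers-constant curve on $F$ carries no such relationship to a generating set of $H$, so this rank-drop step is unavailable to you.

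The construction by iterated drilling also fails. If a short closed geodesic $\gamma$ in $M\sss L_0$ is $M$--essential and you drill it, then in $M\sss(L_0\cup\gamma)$ the new cusp has a peripheral curve freely homotopic in $M$ to $\gamma$, hence $M$--essential; being parabolic it has arbitrarily short representatives, so $\esssys$ drops to $0$. If instead you drill only $M$--inessential geodesics, the $M$--essential short geodesics survive with essentially unchanged length (the drilling theorem bounds how much they can grow as well as shrink), and $\esssys$ does not increase. The unspecified ``adjust the link so its components are null-homotopic in $M$'' cannot repair this without replacing components by curves in different homotopy classes, which changes the complement uncontrollably. The paper's construction is entirely different: it inserts a fixed symmetric tangle---whose complement carries an explicit hyperbolic structure with totally geodesic faces and right-angle dihedral angles---into each tetrahedron of a Cooper--Thurston triangulation of $M$ chosen to have large combinatorial systole, assembles a locally $\CAT(-1)$ metric on $M\sss L_\tau$, and then uses a family of totally geodesic barrier surfaces of bounded complexity to show that any $M$--essential loop must visit many well-separated thick regions and is therefore long.
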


The filling links $L \subset M$ we construct to prove Theorem~\ref{thm:main} are {\em hyperbolic}; that is, $M \sss L$ admits a complete hyperbolic structure (which is unique by Mostow-Prasad Rigidity \cite{mostow,prasad}).  The idea is to find a hyperbolic link $L$ so that for any $L$--relative $1$--spine $f \colon \Gamma \to M \sss L$ and any basis for $\pi_1(\Gamma)$, the $f_*$--image of the basis elements have large translation length; hyperbolic geometry then forces $f_*$ to be injective.

The intuition for this construction comes from the work of White \cite{white} and independently Kapovich and Weidmann \cite{kapovichweidmann} (see below).  Recall that the {\em systole} of a hyperbolic $3$--manifold is the length of the shortest closed geodesic.  White proved that the systole of a closed hyperbolic $3$--manifold $M$ is bounded by a function of the rank of its fundamental group.  His proof involves analyzing $1$--spines $f \colon \Gamma \to M$ of minimal length, and then showing that if the systole is sufficiently large, then $f_*$ must be injective, contradicting the fact it is a $1$--spine of a closed hyperbolic $3$--manifold.  For more on probing the geometry of hyperbolic $3$--manifolds via their $1$--spines, see \cite{BachCoopWhit,Biringer,BiringerSouto}, for example.

The first guess for constructing the required links to prove Theorem~\ref{thm:main} might thus be to find hyperbolic links $L \subset M$ such that the systole of $M \sss L$ is sufficiently large.  This runs into a theorem of Adams and Reid \cite{adamsreid} which states that {\bf any} hyperbolic link in a closed, non-hyperbolic $3$--manifold has systole bounded above by $7.35534...$.  This precise strategy even fails for closed hyperbolic $3$--manifolds by a theorem of Lakeland and the first author \cite{lakelandleininger}, which bounds the length of the systole of hyperbolic link complement by a function of the volume of the original closed hyperbolic manifold.

The short closed geodesics which are used to illustrate the uniform upper bound on systoles in both \cite{adamsreid} and \cite{lakelandleininger} are typically null-homotopic in the $3$--manifold. This hints at a more refined notion of systoles for hyperbolic links. Specifically, if $L \subset M$ is a hyperbolic link, we define the {\em essential systole} of $L$ to be
\[ \esssys(L) = \inf\{\ell(\gamma) \mid \gamma \mbox{ is a loop in } M \sss L \mbox{ with } i \circ \gamma \mbox{ non-null-homotopic in } M \} \]

\begin{remark}
    As a consequence of our definition, observe that if $\esssys(L) > 0$, then every parabolic element of $\pi_1(M \sss L)$ is necessarily in the kernel of the induced map $i_* \colon \pi_1(M \sss L) \to \pi_1(M)$ from inclusion, since such an element is represented by loops with arbitrarily small length.
\end{remark}

White's theorem can be seen as a consequence of a theorem of Kapovich and Weidmann \cite{kapovichweidmann}, which appeared at roughly the same time as White's paper \cite{white}.  We appeal to this result of Kapovich and Weidmann (stated as Theorem~\ref{thm:KW} below), and easily deduce the following.

\newcommand{\BigEssFill}{Given $n > 0$, there exists $R>0$ so that if $M$ is a closed $3$ manifold with $\rank(\pi_1(M)) = n$, and $L \subset M$ is a hyperbolic link with $\esssys(L) > R$, then $L$ is a filling link.}

\begin{theorem} \label{thm:big essential systole filling}
\BigEssFill    
\end{theorem}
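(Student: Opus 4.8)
The plan is to deduce this essentially formally from Theorem~\ref{thm:KW}. Fix an $L$--relative $1$--spine $f \colon \Gamma \to M \sss L$, write $n = \rank(\pi_1(M))$, $G = \pi_1(M \sss L)$, and let $H = f_*(\pi_1(\Gamma)) \leq G$. Since $\Gamma$ is a graph with $\rank(\pi_1(\Gamma)) = n$, a free basis of $\pi_1(\Gamma)$ maps to a generating $n$--tuple of $H$, so $H$ is an $n$--generated, torsion--free, discrete subgroup of $\mathrm{Isom}^+(\mathbb{H}^3)$ (torsion--freeness because $G$ is the fundamental group of a hyperbolic manifold). First I would reduce the injectivity of $f_*$ to the assertion that $H$ is free: because $i \circ f$ is a $1$--spine of $M$, the map $i_* \colon H \to \pi_1(M)$ is surjective, so $n = \rank(\pi_1(M)) \leq \rank(H) \leq n$, whence $\rank(H) = n$; therefore if $H$ is free it is free of rank $n$, and $f_* \colon \pi_1(\Gamma) \twoheadrightarrow H$ is a surjection between free groups of rank $n$, hence an isomorphism since finitely generated free groups are Hopfian. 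Conversely, if $f_*$ is injective then $H \cong \pi_1(\Gamma)$ is free. So it suffices to prove $H$ is free once $\esssys(L)$ is large enough.

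The second ingredient is the observation that, because $\rank(\pi_1(M)) = n$ \emph{exactly}, no element of a generating $n$--tuple of $\pi_1(M)$ can be trivial. Applying this to the image tuple $i_*(h_1), \dots, i_*(h_n)$ of a free basis $h_1, \dots, h_n$ of $\pi_1(\Gamma)$ --- and, more generally, to any $n$--tuple Nielsen--equivalent to it, since an elementary Nielsen transformation of the tuple $(f_*(h_1),\dots,f_*(h_n))$ is induced by the corresponding transformation of the free basis $(h_1,\dots,h_n)$ of $\pi_1(\Gamma)$ and hence descends to a generating tuple of $\pi_1(M)$ --- every such generator $h_j'$ satisfies $i_*(h_j') \neq 1$ in $\pi_1(M)$. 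Hence each $h_j'$ is homotopically essential in $M$, so its geodesic representative in $M \sss L$ has length at least $\esssys(L)$; in particular each $h_j'$ is loxodromic with translation length $> \esssys(L)$, and no such generator can be parabolic once $\esssys(L) > 0$ (using the Remark).

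Now I would run Theorem~\ref{thm:KW} on $H$ with the generating tuple $h_1, \dots, h_n$. As the hyperbolicity and Margulis data of $\mathbb{H}^3$ are universal, Theorem~\ref{thm:KW} yields a constant $R = R(n)$ and the alternative: either $H$ is free (of rank $\leq n$), in which case the first paragraph completes the proof; or, after replacing $h_1, \dots, h_n$ by a Nielsen--equivalent generating $n$--tuple, one of the new generators is parabolic or has translation length at most $R(n)$. In the latter case the second paragraph applies to the new tuple: that generator is essential in $M$, so it cannot be parabolic, and its translation length --- which is the length of a loop in $M \sss L$ that is essential in $M$ --- is at most $R(n)$; thus $\esssys(L) \leq R(n)$, contradicting the hypothesis $\esssys(L) > R(n)$. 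Taking this $R(n)$ in the statement finishes the argument. The step I expect to be the crux is invoking Theorem~\ref{thm:KW} in exactly this form: extracting from the Kapovich--Weidmann Nielsen machinery not merely \emph{some} short (or parabolic) element of $H$ when $H$ fails to be free, but a short element sitting inside a generating $n$--tuple Nielsen--equivalent to the original one. That refinement is what lets the rank count force the short element to be essential in $M$ rather than being absorbed into $\ker(i_*)$, and it is the only place where anything beyond bookkeeping is needed.
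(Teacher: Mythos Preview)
Your proposal is correct and follows essentially the same route as the paper: apply Theorem~\ref{thm:KW} to the image of a basis, use Lemma~\ref{L:nielsenbasis} to lift the Nielsen moves back to a basis of $\pi_1(\Gamma)$, and then use the fact that $\rank(\pi_1(M))=n$ exactly to force a contradiction between ``short generator'' and ``essential in $M$.'' The only difference is organizational --- you front-load the rank/Hopfian reduction and argue the contrapositive (every Nielsen-equivalent generator is essential in $M$, hence long), whereas the paper first obtains the short generator and then shows it must be trivial in $\pi_1(M)$; these are the same argument read in opposite directions. Two small cleanups: keep straight the distinction between $h_j\in\pi_1(\Gamma)$ and $g_j=f_*(h_j)\in\pi_1(M\sss L)$ (you write $i_*(h_j')$ where you mean $i_*(f_*(h_j'))$), and the inequality on translation length should be $\geq\esssys(L)$ rather than strict.
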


Theorem~\ref{thm:big essential systole filling} thus reduces the problem of finding filling links to the problem of finding hyperbolic links with large essential systole.  We do this by an explicit construction, proving the following.

\newcommand{\UnbddEss}{Given a closed, orientable $3$--manifold $M$ with $\rank(\pi_1(M)) \geq 1$ and $r > 0$, there exists a hyperbolic link $L \subset M$ such that
\[ \esssys(L) > r.\]}

\begin{theorem} \label{thm:unbounded essential systole}
    \UnbddEss
\end{theorem}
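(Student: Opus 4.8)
The plan is to build $L$ so that $M\sss L$ carries its complete hyperbolic metric in a form glued from a \emph{fixed finite} family of geometric pieces that are localized in the cells of a triangulation $\mathcal T$ of $M$, and then to choose $\mathcal T$ fine enough, relative to $r$, that any sufficiently short loop in $M\sss L$ is confined to an embedded ball of $M$ and is therefore null-homotopic there.

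First I would isolate the geometric mechanism. Suppose $M\sss L$ is glued from ideal polyhedra, each isometric to one of finitely many \emph{right-angled} ideal polyhedra $P_1,\dots,P_m$ (so their faces assemble into embedded totally geodesic ``walls''), and suppose this decomposition is compatible with the inclusion $i\colon M\sss L\to M$: each polyhedron maps into a single closed cell of $\mathcal T$, and two polyhedra sharing a face map into cells that share a face or coincide. Finiteness of $\{P_j\}$ yields $\epsilon_0>0$ such that any geodesic segment meeting two faces of a single $P_j$ has length $\ge\epsilon_0$ (and likewise for the ideal polygons that form the walls). Hence a closed geodesic $\gamma\subset M\sss L$ of length $\le r$ crosses at most $r/\epsilon_0$ walls, so it meets at most $N_0:=\lceil r/\epsilon_0\rceil+1$ of the polyhedra; consequently $i(\gamma)$ lies in a connected union $Z$ of at most $N_0$ cells of $\mathcal T$. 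This is the entire point: short geodesics, whatever they are, are topologically confined.

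The construction is where I expect the real difficulty. Starting from a triangulation $\mathcal T$, I would drill a link $L=L_{\mathcal T}$ adapted to $\mathcal T$ — in the spirit of the belted-tangle and augmented-link constructions that produce right-angled ideal polyhedral decompositions of link complements — so that: (i) each triangular face of $\mathcal T$ becomes a totally geodesic punctured sphere and each drilled tetrahedron becomes a hyperbolic piece cut into right-angled ideal polyhedra drawn from a finite list $\mathcal P$ \emph{independent of $\mathcal T$}; (ii) reassembling along these faces realizes a complete, finite-volume hyperbolic structure on $M\sss L_{\mathcal T}$, so $M\sss L_{\mathcal T}$ is hyperbolic and carries the decomposition above with a uniform $\epsilon_0=\epsilon_0(\mathcal P)$; and (iii) every component of $L_{\mathcal T}$ is contained in a connected union of at most $B$ cells of $\mathcal T$, where $B$ is a fixed bound. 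Requirement (iii) is essential rather than a convenience: as the Remark notes, $\esssys(L)>0$ forces every parabolic of $\pi_1(M\sss L)$ into $\ker(i_*)$, i.e.\ it forces every component of $L$ to be null-homotopic in $M$, which (iii) guarantees once $\mathcal T$ is fine. Hyperbolicity in (ii) would be checked by assembling the right-angled pieces (Andreev-type input) and ruling out essential spheres, tori, and compressions in the glued manifold; if some obstruction persisted, one would enlarge $L_{\mathcal T}$ by finitely many further components, each localized in cells, which only strengthens the conclusion below.

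Finally I would fix the scale and conclude. Choose an auxiliary Riemannian metric on $M$ and let $\delta>0$ be smaller than its injectivity radius, so that every subset of $M$ of diameter $<\delta$ lies in an embedded metric ball. Given $r$, set $N=\max\bigl(\lceil(r+1)/\epsilon_0\rceil+1,\ B\bigr)$ and, by iterated barycentric subdivision, choose $\mathcal T$ with every closed cell of diameter $<\delta/N$, so that every connected union of at most $N$ cells has diameter $<\delta$; put $L=L_{\mathcal T}$. By (iii) every component of $L$ is contained in a union of at most $B\le N$ cells, hence in an embedded ball, hence null-homotopic in $M$, so every peripheral class of $\pi_1(M\sss L)$ is null-homotopic in $M$. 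Now suppose some loop $\gamma_0$ in $M\sss L$ with $i\circ\gamma_0$ non-null-homotopic in $M$ had length $\le r$. Then $[\gamma_0]$ is nontrivial and non-peripheral, so its free homotopy class contains a closed geodesic $\gamma$ with $\ell(\gamma)\le r<r+1$ and $i\circ\gamma$ still non-null-homotopic in $M$. By the mechanism above, $i(\gamma)$ lies in a connected union $Z$ of at most $N$ cells of $\mathcal T$, so $\diam(Z)<\delta$ and $Z$ lies in an embedded ball of $M$, forcing $i\circ\gamma$ to be null-homotopic in $M$ — a contradiction. Hence $\esssys(L)>r$; and since $\rank(\pi_1(M))\ge1$ there exist loops non-null-homotopic in $M$, so this is a genuine bound. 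All the delicate work is in the construction: producing such a link uniformly over arbitrarily fine triangulations of an arbitrary closed orientable $M$ while simultaneously keeping the piece list finite, the decomposition localized in cells, the link components localized (hence eventually null-homotopic), and the complement hyperbolic.
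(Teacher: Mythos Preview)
Your overall architecture --- a fixed tangle in each tetrahedron of a triangulation, the complement assembled from a finite list of hyperbolic pieces, then refine the triangulation --- is essentially the paper's. The genuine gap is in your ``geometric mechanism'': the constant $\epsilon_0>0$ does not exist for \emph{ideal} polyhedra. Two faces of a right-angled ideal polyhedron that share an ideal vertex (or an edge) come arbitrarily close; in upper half-space with the shared ideal vertex at $\infty$ they are vertical half-planes, and a geodesic arc between them at height $h$ has length of order $1/h\to 0$. Thus a closed geodesic of length $\le r$ can cross arbitrarily many walls by passing near a cusp, and your bound $N_0=\lceil r/\epsilon_0\rceil+1$ on the number of pieces visited is vacuous. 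This is not a removable technicality: your walls \emph{share} cusps, and the entire difficulty of the theorem is precisely that the ordinary systole of $M\sss L$ is uniformly bounded (Adams--Reid, Lakeland--Leininger), so any length-per-crossing argument must somehow separate essential loops from the short loops that wind near cusps.

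The paper's repair is to replace polyhedron faces by a family $\mathcal S_\tau^\circ$ of incompressible punctured spheres indexed by the edges, faces, and tetrahedra of $\tau$, and then to prove, via a \emph{combinatorial systole} lower bound on $\tau$, that any loop non-null-homotopic in $M$ must essentially cross at least $n\sim\combsys(\hat\tau)/16$ of these surfaces whose cusps lie in \emph{pairwise distinct} cusps of $M\sss L_\tau$ (Lemma~\ref{Lem:snaking through}). Surfaces not sharing a cusp are uniformly $\varepsilon$--separated in the locally $\CAT(-1)$ metric, which supplies exactly the per-crossing length your $\epsilon_0$ was meant to give; for the genuine hyperbolic metric the paper instead runs a thick--thin argument with pleated representatives and a packing bound on $1$--separated points in a fixed ball in $\mathbb H^3$. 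There is also a secondary gap in your construction step: gluing right-angled pieces along faces yields a locally $\CAT(-1)$ (let alone hyperbolic) metric only when every edge has degree $\ge 4$ and vertex links are flag, and iterated barycentric subdivision does not control edge degrees; the paper uses Cooper--Thurston triangulations precisely to secure this, and then invokes Thurston's Hyperbolization to pass from $\CAT(-1)$ to hyperbolic.
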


Theorem~\ref{thm:main} is now an easy consequence of these two theorems.

\begin{proof}[Proof of Theorem~\ref{thm:main} assuming Theorems~\ref{thm:big essential systole filling} and \ref{thm:unbounded essential systole}.]
    Suppose $\rank(\pi_1(M)) = n$ and let $R > 0$ be as in Theorem~\ref{thm:big essential systole filling}.  By Theorem~\ref{thm:unbounded essential systole}, there exists a link $L\subset M$ so that $\esssys(L) > R$, which is thus filling by Theorem~\ref{thm:big essential systole filling}.
\end{proof}

\bigskip

\noindent
{\bf Outline of the paper and sketch of the proof.} In Section~\ref{S:hyperbolic geometry and large essential systole} we recall some basics about Nielsen equivalence, \S\ref{S:Nielsen}, and hyperbolic geometry, \S\ref{S:hyperbolic},  then conclude by proving Theorem~\ref{thm:big essential systole filling} in \S\ref{S:reduction theorem}.  The remainder of the paper, Section~\ref{S:links big esssys} contains the construction of links with large essential systole.  The first step is to construct a triangulation of our arbitrary closed manifold $M$ with controlled local properties, but large {\em combinatorial systole}, following a construction due to Cooper and Thurston \cite{CoopThur1988}; see \S\ref{Sec:Triangulations}.  Next, in \S\ref{Sec:Tet tangle}, we explicitly construct a tangle in a tetrahedron whose complement admits a nice hyperbolic structure with totally geodesic boundary and dihedral angles equal to $\frac{\pi}2$.  This tangle serves as the building block for the link with large essential systole that we construct in \S\ref{S:final construction}.  Specifically, starting with the triangulation $\tau$ of a closed $3$--manifold $M$ constructed in \S\ref{Sec:Triangulations} having large combinatorial systole, we delete the tangle from each tetrahedron.  The resulting link $L_\tau$ has an explicit, complete $\CAT(-1)$ metric, and hence admits a complete hyperbolic metric by Thurston's Hyperbolization Theorem \cite{Thurston-hyp}.  While we do not have explicit control over the hyperbolic metric, we use the $\CAT(-1)$ metric to analyze a family of surfaces with bounded Euler characteristic which serve as ``barriers" to accessing the tetrahedra.  We show that having large combinatorial systole implies any loop $\gamma$ in $M \sss L_\tau$ which is non-null-homotopic in $M$ must intersect many tetrahedra, and hence many of the surfaces.  Every intersection of $\gamma$ with one of the surfaces passes through a point with bounded injectivity radius, and separation properties of the surfaces imply that $\gamma$ must pass through many such distinct points, and is therefore long.

\begin{remark}
    Because Kapovich-Weidmann work in the setting of arbitrary $\delta$--hyperbolic spaces, we could avoid using Thurston's Hyperbolization Theorem to prove that the links we construct are filling, and shorten the argument a little bit; see \Cref{sec:conclusion}.  Theorem~\ref{thm:unbounded essential systole} seems independently interesting due to its contrast with the results of \cite{adamsreid} and \cite{lakelandleininger}, so we kept the slightly longer proof.
\end{remark}

\bigskip

\noindent
{\bf Acknowledgements.}  The authors would like to thank Alan Reid for helpful conversations, including the reference to \cite{Saratch} mentioned at the end of the paper.  The authors would also like to thank Slava Krushkal and Ian Biringer for comments on an earlier version of the paper, including Biringer's suggestions that led to Theorems~\ref{thm:fullrank1} and \ref{thm:fullrank2}.  The first author is particularly grateful for his collaboration with Reid on the appendix of \cite{FreeKrus2023} and the discussions during that time regarding filling links that have influenced this paper.

\section{Hyperbolic geometry and large essential systole} \label{S:hyperbolic geometry and large essential systole}

The goal of this section is to prove the following.

\medskip

\noindent
{\bf Theorem~\ref{thm:big essential systole filling}} {\em \BigEssFill}

\medskip

\subsection{Nielsen equivalence} \label{S:Nielsen} 

Nielsen transformations were originally introduced by Nielsen in \cite{nielsen} to prove that every subgroup of a finitely generated free group is free. Since then, they have been used extensively to study properties of free groups (see \cite{frs}). In this section, we recall basic notions from \cite{nielsen}. 

\begin{defn}[Nielsen Equivalence] Let $G$ be a group, and consider an $n$--tuple of elements, $M = (g_1, g_2, g_3, ..., g_n) \in G^n$. Define the following \emph{elementary Nielsen moves} on $M$: 

\begin{enumerate}
    \item For some $1 \leq i \leq n$, replace $g_i$ with $g_i^{-1}$ in $M$; 
    \item For $i \neq j$, $1 \leq i, j \leq n$, replace $g_i$ with $g_ig_j$ in $M$; 
    \item For $i \neq j$, $1 \leq i, j \leq n$, interchange $g_i$ and $g_j$ in $M$. 
\end{enumerate}

A \emph{Nielsen transformation} is a finite sequence of elementary Nielsen moves. 
We say that $M, M' \in G^n$ are \emph{Nielsen equivalent}, denoted $M \sim_{N} M'$, if they differ by a Nielsen transformation. 
\end{defn}

Recall that the \textit{basis} of a finite rank free group is a minimal collection of generators of $\mathbb{F}_n$. In \cite{nielsen}, Nielsen showed that if $G$ is a finite rank free group, then the set of Nielsen transformations on a basis generates Aut$(G)$. The following lemma is immediate.

\begin{lemma}\label{L:nielsenbasis}
Let $\phi \in \text{Hom}(\mathbb{F}_n, G)$, where $\mathbb{F}_n$ is a finite rank free group. Given a basis $\{b_1, b_2, \ldots , b_n\}$ of $\mathbb{F}_n$, let $c_i = \phi(b_i)$ ($1 \leq i \leq n$), and let $L_1 = (c_1, c_2, \ldots , c_n) \in G^n$. Let $L_2 = (c'_1, c'_2, \ldots , c'_n) \in G^n$ such that $L_1 \sim_{N} L_2$. Then there exists $\{b'_1, b'_2, ..., b'_n\}$, a basis for $\mathbb{F}_n$, such that $\phi(b'_i) = c'_i$ for all $1 \leq i \leq n$. 
\end{lemma}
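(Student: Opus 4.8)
The plan is to reduce to the case of a single elementary Nielsen move and then track how each such move on the image tuple is induced by an automorphism of $\mathbb{F}_n$. Since $L_1 \sim_N L_2$, by definition there is a finite sequence of elementary Nielsen moves carrying $L_1$ to $L_2$; arguing by induction on the length of this sequence, it suffices to prove the statement when $L_2$ is obtained from $L_1$ by a single move. So assume $L_1 = (c_1, \ldots, c_n) = (\phi(b_1), \ldots, \phi(b_n))$ and $L_2$ differs by one elementary move of type (1), (2), or (3).

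For each of the three move types, I would write down the corresponding elementary Nielsen automorphism $\psi \colon \mathbb{F}_n \to \mathbb{F}_n$ defined on the basis $\{b_1, \ldots, b_n\}$: for type (1), $\psi(b_i) = b_i^{-1}$ and $\psi(b_k) = b_k$ for $k \neq i$; for type (2), $\psi(b_i) = b_i b_j$ and $\psi(b_k) = b_k$ for $k \neq i$; for type (3), $\psi$ swaps $b_i$ and $b_j$ and fixes the rest. Each $\psi$ is an automorphism of $\mathbb{F}_n$ by Nielsen's theorem (cited above), so $\{b'_1, \ldots, b'_n\} := \{\psi(b_1), \ldots, \psi(b_n)\}$ is again a basis of $\mathbb{F}_n$. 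A direct computation using that $\phi$ is a homomorphism then shows $\phi(b'_k) = c'_k$ for all $k$: e.g. in the type (2) case, $\phi(b'_i) = \phi(b_i b_j) = \phi(b_i)\phi(b_j) = c_i c_j = c'_i$ and $\phi(b'_k) = \phi(b_k) = c_k = c'_k$ for $k \neq i$, matching exactly the definition of the elementary move on the tuple. The other two cases are analogous and even easier.

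Finally, to handle a sequence $L_1 = N_0 \sim N_1 \sim \cdots \sim N_m = L_2$ of $m$ elementary moves, I would apply the single-move case repeatedly: having produced a basis realizing $N_{t}$ as the $\phi$-image of a basis of $\mathbb{F}_n$, the single-move argument produces a basis realizing $N_{t+1}$. After $m$ steps we obtain the desired basis $\{b'_1, \ldots, b'_n\}$ with $\phi(b'_i) = c'_i$.

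I do not anticipate a serious obstacle here; the only point requiring care is the bookkeeping in the inductive step — making sure that the basis produced at stage $t$ is genuinely a basis (which follows because each elementary Nielsen move is realized by an automorphism, and automorphisms send bases to bases) so that the single-move argument applies again at stage $t+1$. The verification that the induced automorphism's effect on the basis matches the elementary move on the tuple is a routine calculation of the kind sketched above for type (2).
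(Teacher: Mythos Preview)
Your proposal is correct and follows essentially the same approach as the paper: lift each elementary Nielsen move on $(c_1,\ldots,c_n)$ to the corresponding elementary Nielsen move on $(b_1,\ldots,b_n)$, note that the result is again a basis since Nielsen moves are automorphisms of $\mathbb{F}_n$, and verify $\phi(b_i')=c_i'$ using that $\phi$ is a homomorphism. Your write-up is simply a more detailed unpacking of the same argument, with the induction on the number of moves made explicit.
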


\begin{proof} 
Note that $c_i = \phi(b_i)$ implies that $c_i^{-1} = \phi(b_i^{-1})$ and $c_ic_j = \phi(b_ib_j)$. Thus, if $L_1 = (c_1,c_2,\ldots,c_n)$ and $L_2 = (c_1',c_2',\ldots,c_n')$ differs from $L_1$ by a Nielsen transformation, then we can lift the Nielsen transformation to $(b_1,\ldots,b_n)$ to produce $(b_1',b_2',\ldots,b_n')$ so that $\phi(b_i') = c_i'$ for all $i$.  The resulting tuple $(b_1',b_2',\ldots,b_n')$ differs from $(b_1, b_2,\ldots, b_n)$ by a Nielsen transformation, which is an element of Aut$(\mathbb F_n)$. Therefore $(b_1',b_2',\ldots,b_n')$ is also a basis of $\mathbb{F}_n$. 
\end{proof}

\subsection{Hyperbolic space}\label{S:hyperbolic}

In this section, we recall basic facts about hyperbolic space which will be useful in the proof of \Cref{thm:big essential systole filling}.  See, for example, Chapter 2 of \cite{Thurston-Book}.

We consider the upper-half space model of \textit{hyperbolic 3-space},
\[ \mathbb{H}^3 = \left\{(x_1, x_2, x_3) \in \mathbb{R}^3: x_n > 0 \right\}\]
endowed with the Riemannian metric $\frac{dx_1^2 + dx_2^2 + dx_3^2}{x_3^2}$.  We let $\rho$ denote the associated distance function. The group of orientation-preserving isometries of $(\mathbb{H}^3, \rho)$ is PSL$_2(\mathbb{C})$, the set of $2 \times 2$ matrices with determinant $1$ and complex entries, modulo $\{\pm I\}$. The action is by conformal extension of linear fractional transformations on the $(x_1,x_2)$--plane, viewed as $\mathbb C$. A \textit{hyperbolic 3-manifold} is the quotient of $\mathbb{H}^3$ by a discrete, torsion-free subgroup of PSL$_2(\mathbb{C})$. 

Recall that a geodesic metric space, $X$, is \textit{$\delta$-hyperbolic} if for any $x, y, z \in X$, the geodesic segment $[x, y]$ lies in the $\delta$-neighborhood of the union of geodesic segments $[x, z] \cup [y, z]$. In other words, all triangles in $X$ are \textit{$\delta$-thin}. 

Since $\delta$-hyperbolic spaces are modeled on hyperbolic space, $\mathbb{H}^3$ is a classic example of a $\delta$-hyperbolic space. To see this, it suffices to consider a triangle in the upper half plane $\mathbb{H}^2$ obtained by restricting $\rho$ to the subspace with $x_1 =0$.
Triangles in $\mathbb{H}^2$ have area $\pi - \alpha - \beta - \gamma$, where $\alpha$, $\beta$, and $\gamma$ are the interior angles of the triangle. The maximum area of a triangle in $\mathbb H^2$ is $\pi$, which is realized by an ideal triangle with vertices at $-1$, $1$, and $\infty$, for example. Note that $\delta$ is the distance between $i$ and the geodesic $x_2 = 1$. This distance is bounded above by the distance between $i$ and $1 + i$, which is $\text{ln}\left(\frac{1}{\sqrt{2}}\right) < 1$. So we can set $\delta = 1$ to be a $\delta$-hyperbolicity constant for $\mathbb{H}^3$.

\subsection{Large essential systole implies filling} \label{S:reduction theorem}

We will deduce Theorem~\ref{thm:big essential systole filling} from a general theorem of Kapovich-Weidmann from \cite{kapovichweidmann} about actions on Gromov hyperbolic spaces (whose origins they attribute to Gromov).  We will only need to apply their result in the case that the Gromov hyperbolic space is $\mathbb H^3$, and will not need the full strength of their conclusion.  We therefore state only the form we will need, but emphasize that their result is more general and has a stronger conclusion.
\begin{theorem}[\!\cite{kapovichweidmann}] \label{thm:KW} For any integer $n > 0$, there exist a constant $C(n)$ with the following property. Suppose a group
\[ G =  \langle g_1,\ldots,g_n\rangle < \PSL_2(\mathbb C).\]
Then one of the following holds:
\begin{enumerate}
    \item The group $G$ is free with basis $(g_1,\ldots,g_n)$, or
    \item The $n$-tuple $(g_1,\ldots,g_n)$ is Nielsen equivalent to $(g_1',\ldots,g_n')$ and there exists $y \in \mathbb H^3$ with $\rho(g_1'\cdot y,y) < C(n)$.
\end{enumerate}
\end{theorem}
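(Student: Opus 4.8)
The plan is to recover the Kapovich--Weidmann dichotomy by combining a Nielsen reduction of the generating tuple with a ping-pong argument (equivalently, quasigeodesic chaining) in the $1$-hyperbolic space $\mathbb H^3$; the underlying idea goes back to Gromov, and can alternatively be organized via Stallings-type foldings. Suppose alternative (2) fails for a constant $C(n)$ to be chosen: for every tuple $(g_1',\dots,g_n')\sim_N(g_1,\dots,g_n)$ we have $\rho(g_1'\cdot y,y)\ge C(n)$ for all $y\in\mathbb H^3$. Composing with transpositions (elementary move (3)) gives the same inequality with $g_1'$ replaced by any $g_i'$; since $\inf_y\rho(g\cdot y,y)=0$ whenever $g$ is elliptic or parabolic, it follows that \emph{every} coordinate of \emph{every} tuple Nielsen equivalent to $(g_1,\dots,g_n)$ is loxodromic with translation length $\ell(\cdot)\ge C(n)$. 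We must show $G$ is free with basis $(g_1,\dots,g_n)$. Each elementary Nielsen move lifts to an automorphism of $\mathbb F_n$ carrying a basis to a basis, so the property ``$(g_1,\dots,g_n)$ is a free basis of $G$'' is invariant under Nielsen equivalence (in the spirit of Lemma~\ref{L:nielsenbasis}); we may therefore replace $(g_1,\dots,g_n)$ by any Nielsen-equivalent tuple of our choosing.

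Next I would pass to a Nielsen-reduced representative. Put a complexity on the Nielsen class --- say $c(g_1',\dots,g_n')=\sum_i\ell(g_i')$ --- which is bounded below by $nC(n)$ by the previous paragraph, and, since $G$ need not be discrete, fix a tuple $(g_1,\dots,g_n)$ with $c(g_1,\dots,g_n)$ within a prescribed $\eta>0$ of the infimum over its class. A move of type (2) replaces $g_i$ by $g_ig_j^{\pm1}$ and changes $c$ by $\ell(g_ig_j^{\pm1})-\ell(g_i)$, so $\eta$-minimality forces $\ell(g_ig_j^{\pm1})\ge\ell(g_i)-\eta$ for all $i\ne j$. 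The point is to feed this into the hyperbolic-geometry estimate that, for loxodromics $a,b$, one has $\min_{\epsilon=\pm1}\ell(ab^\epsilon)\le\ell(a)+\ell(b)-2\Omega(a,b)+O(1)$, where $\Omega(a,b)$ is the length of the region along which the axes of $a$ and $b$ coarsely fellow-travel and the $O(1)$ depends only on the hyperbolicity constant. Combining the two inequalities bounds the coarse overlap of any two of the $2n$ oriented axes $\mathrm{ax}(g_i^{\pm1})$ by roughly half the smaller of the two translation lengths; in particular these $2n$ geodesics are pairwise ``transverse,'' with pairwise distinct and quantitatively separated endpoints.

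With a Nielsen-reduced tuple in hand I would run the ping-pong argument in the guise of quasigeodesic chaining. Given a nonempty reduced word $w=g_{i_1}^{\epsilon_1}\cdots g_{i_m}^{\epsilon_m}$ and a well-chosen basepoint $o$, form the broken geodesic through $o_k=(g_{i_1}^{\epsilon_1}\cdots g_{i_k}^{\epsilon_k})\cdot o$. Isometry-invariance of $\rho$ shows each leg $[o_{k-1},o_k]$ has length $\rho(g_{i_k}^{\epsilon_k}\cdot o,o)\ge C(n)$, and identifies the turning at $o_k$ with the angle at $o$ between $g_{i_k}^{-\epsilon_k}\cdot o$ and $g_{i_{k+1}}^{\epsilon_{k+1}}\cdot o$; since consecutive letters of a reduced word never form $g_i^\epsilon g_i^{-\epsilon}$, the isometries $g_{i_k}^{-\epsilon_k}$ and $g_{i_{k+1}}^{\epsilon_{k+1}}$ are distinct generator powers, and the transversality above bounds this angle below by some $\theta_0=\theta_0(n)>0$ (possibly after first grouping the letters of $w$ into pairs in order to absorb the overlaps). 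Gromov's local-to-global principle --- a concatenation of geodesic segments of length $\ge L$ meeting at angles $\ge\theta_0$ is a global quasigeodesic once $L\ge L_0(1,\theta_0)$ --- then applies, and with $C(n)\ge L_0(1,\theta_0(n))$ the path from $o$ to $o_m$ is a nondegenerate quasigeodesic; hence $w\cdot o\ne o$ and $w\ne 1$ in $G$. Therefore $(g_1,\dots,g_n)$, and with it the original tuple, is a free basis of $G$ --- this is alternative (1).

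I expect the main obstacle to be the quantitative bookkeeping needed to make these two halves meet with a single constant $C(n)$ depending on $n$ alone. Concretely one must: (i) prove the overlap/translation-length estimate with explicit control of the additive error, and verify that a type-(2) move decreases $c$ by an amount bounded below in terms of the overlap; (ii) cope with the fact that the resulting overlap bound only reads $\Omega(g_i^{\pm1},g_j^{\pm1})\lesssim\tfrac12\min(\ell(g_i),\ell(g_j))$ rather than an absolute constant --- so the chaining step must be reorganized, e.g. by reading $w$ two letters at a time, each pair advancing $o$ by $\ge\max(\ell(g_i),\ell(g_j))-O(1)$ coherently --- and must choose $o$ so that all $2n$ displacements stay long and all relevant angles stay bounded below simultaneously; this is where controlling many generators at once forces $\theta_0$, hence $L_0$ and $C(n)$, to degrade with $n$; and (iii) carry the non-discreteness of $G$ through the argument via $\eta$-approximate minimizers, checking that the errors are absorbed (or, as Kapovich--Weidmann do, replace the analytic complexity by the combinatorial complexity of an auxiliary folded graph, which is automatically well-ordered). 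None of these steps is conceptually deep, but assembling them with the stated uniformity in $n$ is the technical content of the Kapovich--Weidmann theorem.
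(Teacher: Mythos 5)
First, a point of orientation: the paper does not prove this statement at all --- it is quoted (in a deliberately weakened form) from Kapovich--Weidmann and used as a black box, so there is no ``paper proof'' to compare against. What you have written is a plan for reproving the cited theorem, and the plan does follow the standard route (Nielsen reduction of the tuple with respect to a length-type complexity, then a broken-geodesic/ping-pong chain lemma), which is how Kapovich--Weidmann, and before them Gromov and White, organize the argument. So the strategy is the right one.

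As a proof, however, the proposal has genuine gaps, and they are precisely the items you defer to at the end --- which means the attempt stops where the theorem begins. Concretely: (1) the inequality $\min_{\epsilon=\pm1}\ell(ab^\epsilon)\le\ell(a)+\ell(b)-2\Omega(a,b)+O(1)$ is false as stated once the overlap exceeds the translation lengths (two loxodromics sharing an axis have $\Omega=\infty$ but $\ell(ab^{-1})=|\ell(a)-\ell(b)|$); you need the capped version plus a separate minimality argument excluding $\Omega>\min(\ell(a),\ell(b))$. (2) Your complexity $\sum_i\ell(g_i')$ is basepoint-free, but the chain lemma is run at a basepoint $o$, and the Gromov products $(g_{i_k}^{-\epsilon_k}\cdot o,\,g_{i_{k+1}}^{\epsilon_{k+1}}\cdot o)_o$ are not controlled by axis overlaps alone when $o$ is far from the axes: the common initial excursion from $o$ toward two distant axes inflates the product. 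This is why the complexity in \cite{kapovichweidmann} is a displacement at a point, minimized jointly over the basepoint and the Nielsen class --- matching the form of conclusion (2) of the theorem. (3) Minimality only yields Gromov products at most half the adjacent displacements plus a constant, while the broken-line lemma requires strictly less than half minus a definite constant; closing that gap (a finer lexicographic complexity, pairing letters, or weak-reducedness conditions on triples) is the technical heart of the theorem and is only named, not carried out. For the purposes of this paper none of this needs to be done: the correct move is simply to cite \cite{kapovichweidmann}, as the authors do.
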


In other words, if $G$ acts by isometries on $\mathbb{H}^3$, then $G$ is either free or contains a nontrivial element with small translation length. We are now ready to prove \Cref{thm:big essential systole filling}.

\begin{proof}[Proof of Theorem~\ref{thm:big essential systole filling}] Suppose $M$ is a closed $3$--manifold with $\rank(\pi_1(M)) = n$, and suppose $L \subset M$ is a hyperbolic link with $\esssys(L) \geq C(n)$, from Theorem~\ref{thm:KW}.  Now let 
\[ f :\Gamma \to M \sss L \]
be an $L$-relative $1$-spine.  Choose any basis $(h_1,\ldots,h_n)$ for $\pi_1(\Gamma)$, and we write $g_j = f_*(h_j)$, for each $j$.  By Theorem~\ref{thm:KW}
\[ \langle g_1,\ldots,g_n \rangle = f_*(\pi_1(\Gamma))\]
is free with basis $(g_1,\ldots,g_n)$, or $(g_1,\ldots,g_n)$ is Nielsen equivalent to $(g_1',\ldots,g_n')$ so that for some $y \in \mathbb H^3$, we have $\rho(g_1' \cdot y,y) < C(n)$.  
First, we suppose we are in the latter case and derive a contradiction.

Let $i \colon M \sss L \to M$ be the inclusion, and note that $i_*(g_1')$ is necessarily trivial in $\pi_1(M)$ since it represents a loop with length less than $\esssys(L)$.
By \Cref{L:nielsenbasis}, the Nielsen moves on $(g_1,\ldots,g_n)$ lift to Nielsen moves on $(h_1,\ldots,h_n)$, producing a new basis $(h_1',\ldots,h_n')$ of $\pi_1(\Gamma)$ so that $f_*(h_j') = g_j'$.  Since 
\[ (i  \circ f)_*(h_1') = i_*(f_*(h_1)) = i_*(g_1'),\] 
we see that
\[ \pi_1(M) = i_*(f_*(\pi_1(\Gamma))) 
   = \langle i_*(g_1'),\ldots,i_*(g_n'))\rangle
   = \langle i_*(g_2'),\ldots,i_*(g_n'))\rangle,\]
contradicting the fact that $\rank(\pi_1(M)) = n$.

Therefore, $f_*(\pi_1(\Gamma))$ is free on $(g_1,\ldots,g_n)$, and hence $f_*$ is injective by the Hopfian property of free groups.  Since $f \colon \Gamma \to M \sss L$ was an arbitrary $L$--relative $1$--spine, it follows that $L$ is a filling link. 
\end{proof}

Theorem~\ref{thm:big essential systole filling} could also be proved using White's arguments in \cite{white}.  Specifically, White considers $1$--spines of minimal length in closed hyperbolic manifolds, and ultimately his arguments prove that either the $1$--spine has a bounded-length embedded cycle $C$, or the map is injective on the level of fundamental groups. See also \cite[Proposition~A.2]{Biringer}, from which the same conclusion can be drawn. When the essential systole is larger than the length of $C$, one obtains a contradiction similar to one seen in the proof above.  This is not explicitly stated, and rather than repeat White's arguments to deduce the result, the authors opted to apply the argument above.    Finally, we note that the alternate proof for Theorem~\ref{thm:main} in \S\ref{sec:conclusion} appeals to the more general version of Theorem~\ref{thm:KW} from \cite{kapovichweidmann}.

\subsection{Full rank--$n$ filling links}

Nowhere in the proof of Theorem~\ref{thm:big essential systole filling} was surjectivity of $i_* \circ f_*$ used.  This suggests the following definition.

Say that a map $f \colon \Gamma \to M$ has {\em full rank} if $\rank(f_*(\pi_1(\Gamma)) = \rank(\pi_1(\Gamma))$.  Then define a link $L \subset M$ to be {\em full rank--$n$ filling} if for every $f \colon \Gamma \to M \sss L$ such that $\rank(\pi_1(\Gamma)) \leq n$ and $i \circ f \colon \Gamma \to M$ is full rank, then $f_*$ is injective.  The proof of Theorem~\ref{thm:big essential systole filling} applies verbatim to prove the following.

\begin{theorem} \label{thm:fullrank1}
    Given $n >0$, there exists $R > 0$ such that if $L \subset M$ is a hyperbolic link with $\esssys(L) > R$, then $L$ is full rank--$n$ filling. \qed
\end{theorem}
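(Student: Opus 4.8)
The plan is to rerun the proof of Theorem~\ref{thm:big essential systole filling} almost verbatim, with one substitution: the contradiction there was obtained from the fact that $i_* \circ f_*$ is surjective onto $\pi_1(M)$ together with $\rank(\pi_1(M)) = n$, whereas here the same contradiction will come instead from the hypothesis that $i \circ f$ has full rank. Concretely, I would set $R = \max\{C(1),\dots,C(n)\}$, where $C(m)$ is the Kapovich--Weidmann constant of Theorem~\ref{thm:KW}, and let $L \subset M$ be a hyperbolic link with $\esssys(L) > R$. Fixing the complete hyperbolic structure on $M \sss L$, I identify $\pi_1(M \sss L)$ with a discrete torsion-free subgroup of $\PSL_2(\mathbb{C})$. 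Given any $f \colon \Gamma \to M \sss L$ with $m := \rank(\pi_1(\Gamma)) \leq n$ and $i \circ f$ of full rank, I must show $f_*$ is injective.

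Next I would apply the Kapovich--Weidmann dichotomy. Choose a basis $(h_1,\dots,h_m)$ of $\pi_1(\Gamma) \cong \mathbb{F}_m$, set $g_j = f_*(h_j)$, and apply Theorem~\ref{thm:KW} to $\langle g_1,\dots,g_m\rangle = f_*(\pi_1(\Gamma))$, with $m$ in place of $n$. If this group is free with basis $(g_1,\dots,g_m)$, then $f_*$ is injective by the Hopfian property of free groups and we are done. Otherwise $(g_1,\dots,g_m)$ is Nielsen equivalent to some $(g_1',\dots,g_m')$ for which there is $y \in \mathbb{H}^3$ with $\rho(g_1' \cdot y, y) < C(m) \leq R$. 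In that case $g_1'$ is represented by a loop in $M \sss L$ of length $< R < \esssys(L)$, so $i_*(g_1')$ is trivial in $\pi_1(M)$, and by Lemma~\ref{L:nielsenbasis} the Nielsen transformation lifts to a basis $(h_1',\dots,h_m')$ of $\pi_1(\Gamma)$ with $f_*(h_j') = g_j'$. Therefore
\[ (i \circ f)_*(\pi_1(\Gamma)) = \langle i_*(g_1'),\dots,i_*(g_m')\rangle = \langle i_*(g_2'),\dots,i_*(g_m')\rangle, \]
which has rank at most $m-1 < m = \rank(\pi_1(\Gamma))$, contradicting that $i \circ f$ has full rank. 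Hence only the first alternative can occur, and since $f$ was arbitrary, $L$ is full rank--$n$ filling.

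I do not expect a genuine obstacle here: the argument for Theorem~\ref{thm:big essential systole filling} never used surjectivity of $i_* \circ f_*$, only that its image does not drop rank, and that is exactly what ``full rank'' encodes. The one bookkeeping point worth flagging is that $\Gamma$ now ranges over all ranks $m \leq n$ rather than the single value $n = \rank(\pi_1(M))$, which is why $R$ is taken to be the maximum of the finitely many constants $C(1),\dots,C(n)$ (if $C$ is known to be monotone one may of course simply take $R = C(n)$).
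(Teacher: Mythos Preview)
Your proposal is correct and follows exactly the approach the paper intends: the paper itself does not give a separate proof but simply notes that the proof of Theorem~\ref{thm:big essential systole filling} applies verbatim, since surjectivity of $i_* \circ f_*$ was never used, only that the image does not drop in rank. Your bookkeeping observation that one should take $R = \max\{C(1),\dots,C(n)\}$ to accommodate all ranks $m \leq n$ is a detail the paper elides but is worth making explicit.
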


\begin{remark}
    The above strengthening of Theorem~\ref{thm:big essential systole filling}, as well as Theorem~\ref{thm:fullrank2} strengthening Theorem~\ref{thm:main}, were suggested to the authors by Ian Biringer.
\end{remark}

\section{Links with large essential systole} \label{S:links big esssys}

The goal of this section is to prove the following.

\medskip

\noindent
{\bf Theorem~\ref{thm:unbounded essential systole}}  {\em \UnbddEss}

\medskip

The proof involves constructing a particular tangle in a tetrahedron which, when removed from the tetrahedra ($3$--simplices) of a triangulation of a $3$--manifold, produces a link.  We apply this construction to a particularly nice kind of triangulation on $M$, and show that the resulting link is hyperbolic, and furthermore, if the ``combinatorial systole" of the triangulation is large, then the essential systole of the link is large.

\subsection{Triangulations} \label{Sec:Triangulations}
Suppose $M$ is a closed $3$--manifold, and $\tau$ is a triangulation of $M$.  Let $\hat \tau$ denote the subgraph of the $1$--skeleton of the first barycentric subdivision of $\tau$ which is the union of $1$--simplices adjacent to barycenters $v_T$ of the tetrahedra $T$ of $\tau$.  The graph $\hat \tau$ is bipartite, with the barycenters of tetrahedra one color and all other vertices the second color.  We equip $\hat \tau$ with the usual graph metric which assigns length $1$ to each edges.  Then for every two tetrahedra $T$ and $T'$ of $\tau$, the distance between $v_T$ and $v_{T'}$ is $2$ if and only if $T$ and $T'$ are distinct and share a vertex, edge, or face.

We define the \textit{combinatorial systole} of $\hat \tau$ to be: 
$$\text{combsys}(\hat \tau) = \min\{\ell_{\hat \tau}(\gamma): \gamma \text{ a loop in $\hat \tau$ which is not nullhomotopic in $M$}\}.$$





We will ultimately be interested in manifolds with triangulations with large combinatorial systole.  The main consequence of this for our purposes is the following.
\begin{lemma} \label{lem:big combsys, many tetrahedra}
If $\tau$ is any triangulation of a closed $3$--manifold $M$, then any non-null-homotopic loop in $M$ has non-empty intersection with least $n = \lfloor \tfrac{\combsys(\hat \tau)}{16} \rfloor$ tetrahedra, $T_0,\ldots,T_{n-1}$ of $\tau$ such that $v_{T_i}$ and $v_{T_j}$ are distance at least $8$ apart for all $i \neq j$.
\end{lemma}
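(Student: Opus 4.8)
The plan is to homotope $\gamma$ into a combinatorial loop $\hat\gamma$ in $\hat\tau$, use the hypothesis of large combinatorial systole to force $\hat\gamma$ to spread out in the graph metric of $\hat\tau$, and then extract the required tetrahedra by a greedy packing argument along $\hat\gamma$. If $\combsys(\hat\tau)<16$ then $n=0$ and there is nothing to prove, so we may assume $\combsys(\hat\tau)\geq 16$. Let $S$ be the set of (closed) tetrahedra of $\tau$ that meet $\gamma$, so $\gamma$ lies in the subcomplex $N=\bigcup_{T\in S}T$. A standard argument (push $\gamma$ off the lower-dimensional skeleta of $N$, then radially toward the barycenters $v_T$) produces a loop $\hat\gamma$ in $\hat\tau\cap N$, homotopic to $\gamma$ in $M$, all of whose tetrahedron-barycenters lie in $\{v_T:T\in S\}$; deleting backtracks, we may take $\hat\gamma$ reduced. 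Two facts will be used: every edge of $\hat\tau$ is incident to a tetrahedron-barycenter, so consecutive tetrahedron-barycenters along $\hat\gamma$ lie at graph distance exactly $2$; and, since $\gamma$ (hence $\hat\gamma$) is non-null-homotopic in $M$, we have $\ell_{\hat\tau}(\hat\gamma)\geq \combsys(\hat\tau)$.

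The key intermediate step is the following spreading claim: if $p$ is any vertex of $\hat\tau$ and $R$ is an integer with $2R+1<\combsys(\hat\tau)$, then every loop contained in the metric ball $B_R(p)\subset\hat\tau$ is null-homotopic in $M$. Indeed, $B_R(p)$ is connected, and a shortest-path tree rooted at $p$ (restricted to $B_R(p)$) is a spanning tree of depth at most $R$; hence $\pi_1(B_R(p))$ is generated by loops in $\hat\tau$ of length at most $2R+1<\combsys(\hat\tau)$, each of which is null-homotopic in $M$ by the definition of $\combsys$. Applying this with $p=v_{T_0}$ a fixed tetrahedron-barycenter of $\hat\gamma$ and $R$ as large as allowed, we conclude $\hat\gamma\not\subseteq B_R(v_{T_0})$, and a short computation with the parity of distances to $v_{T_0}$ yields a tetrahedron-barycenter $v_{T^*}$ of $\hat\gamma$ with $d_{\hat\tau}(v_{T_0},v_{T^*})\geq \tfrac{1}{2}\combsys(\hat\tau)-2$.

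Now the packing. Fix one of the two arcs of $\hat\gamma$ running from $v_{T_0}$ to $v_{T^*}$ and list the tetrahedron-barycenters along it in order. For $i=0,1,2,\dots$ let $T_i$ be the first tetrahedron in this list with $d_{\hat\tau}(v_{T_i},v_{T_0})\geq 8i$. Since $d_{\hat\tau}(\cdot,v_{T_0})$ takes only even values on tetrahedron-barycenters and changes by exactly $2$ from one to the next, it attains every even value up to $d_{\hat\tau}(v_{T_0},v_{T^*})$, so in fact $d_{\hat\tau}(v_{T_i},v_{T_0})=8i$; therefore $d_{\hat\tau}(v_{T_i},v_{T_j})\geq 8|i-j|\geq 8$ for $i\neq j$. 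The process continues at least until $8i$ exceeds $\tfrac12\combsys(\hat\tau)-2$, which (checking the floor arithmetic) yields at least $\lfloor \combsys(\hat\tau)/16\rfloor=n$ tetrahedra $T_0,\dots,T_{n-1}$, each of which lies in $S$ and hence meets $\gamma$, and whose barycenters are pairwise at distance at least $8$.

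The step I expect to demand the most care is the spreading step — both pinning down the reduction of a possibly badly-behaved loop $\gamma$ to a combinatorial loop $\hat\gamma$ whose tetrahedron-barycenters are genuinely tetrahedra met by $\gamma$, and getting the constant right in ``large combinatorial systole $\Rightarrow$ $\hat\gamma$ has large diameter''. This is precisely where the factor $2$ in $16=2\cdot 8$ enters (the $8$ being the separation required in the conclusion, the $2$ coming from the generating loops of $\pi_1(B_R(p))$ having length about $2R$). The remainder is routine bookkeeping with the graph metric on $\hat\tau$.
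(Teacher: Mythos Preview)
Your proposal is correct and follows essentially the same approach as the paper: homotope $\gamma$ to a combinatorial loop in $\hat\tau$ meeting the same tetrahedra, use a spanning-tree argument to show that balls of radius $R$ with $2R+1<\combsys(\hat\tau)$ have trivial $\pi_1$-image in $M$, and then select tetrahedron-barycenters at distances $8k$ from $v_{T_0}$ (the paper picks them directly on the spheres; you go via a far point $v_{T^*}$ and a discrete intermediate value step along an arc, which is a harmless variation). One small wording slip: the distance $d_{\hat\tau}(\cdot,v_{T_0})$ between consecutive tetrahedron-barycenters along $\hat\gamma$ can change by $0$ as well as $\pm 2$, not ``exactly $2$''; but the conclusion that every even value up to $d_{\hat\tau}(v_{T_0},v_{T^*})$ is attained still follows, so the argument is unaffected.
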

\begin{proof}  
First observe that we may homotope $\gamma$ so that it is a combinatorial loop in the graph $\hat \tau$ meeting the exact same set of tetrahedra. We will find the required tetrahedra from those whose barycenters are vertices of $\gamma$. 

First observe that for the closed ball in $\hat \tau$ of radius $R = \lfloor \frac{\combsys(\hat \tau)}2 \rfloor-2$ centered at any vertex of $\hat \tau$, the inclusion of this ball into $M$ must be trivial on $\pi_1$.  To see this, observe that there is a maximal tree so that every point is connected to the center of the ball by a path of length at most $R$ in the tree, and hence the fundamental group is generated by loops of length at most $2R+1 < \combsys(\hat \tau)$. These loops are all trivial by definition of the combinatorial systole, so the image of the fundamental group of the ball is trivial.

Now pick any vertex $v_{T_0}$ on $\gamma$ which is the barycenter of a tetrahedron $T_0$ and let $n$ be as in the lemma.  Since $\gamma$ is non-null-homotopic in $M$, it follows that it is not contained in the closed ball of radius
\[ 8(n-1) = 8 \left \lfloor \frac{\combsys(\hat \tau)}{16}\right \rfloor - 8 < R.  \]
In particular, $\gamma$ nontrivially intersects the spheres of radius $8k$ centered at $v_{T_0}$, where $k=0,\ldots,n-1$, and we let $v_k \in \gamma$ be any vertex of intersection.  Since we are considering spheres of even radius, and a vertex in $\hat{\tau}$ whose distance from $v_{T_0}$ is even must also be a barycenter of a tetrahedron, it follows that $v_k = v_{T_k}$ for some tetrahedron $T_k$ for each $k$.
By construction, for all $i$ and $j$, $v_{T_i}$ and $v_{T_j}$ are distance at least $8|j-i|$ apart, proving the lemma. 
\end{proof}

We are interested in triangulations with large combinatorial systole to which we can apply the previous lemma. 
We will also want to impose some additional control on the local structure of our triangulations.  This additional control can be described by constraints on the links of vertices, which we now describe.

A {\em Cooper-Thurston triangulation} $\tau$ of a $3$--manifold is one for which the link of every vertex is isomorphic to the double over the boundary of one of the five triangulations of a disk shown in Figure~\ref{Fig:CT links}.  In \cite{CoopThur1988}, Cooper and Thurston proved that such triangulations exist.  A minor modification of their construction proves the next proposition.

\begin{figure}[h]
\begin{tikzpicture}
    \filldraw[opacity=.1] (0,0) -- (2,0) -- (2,2) -- (0,2) -- (0,0) -- (2,0);
    \draw[thick] (0,0) -- (2,0) -- (2,2) -- (0,2) -- (0,0) -- (2,0);
    \draw[thick] (0,0) -- (2,2);
    \draw[thick] (2,0) -- (0,2);
    \begin{scope}[shift = {(4,0)},scale = 1.1]
    \filldraw[opacity=.1] (0,0) -- (2,0) -- (1,1.7) -- (0,0) -- (2,0);
    \draw[thick] (0,0) -- (2,0) -- (1,1.7) -- (0,0) -- (2,0);
    \draw[thick] (1,0) -- (.5,.85) -- (1.5,.85) -- (1,0);
    \draw[thick] (0,0) -- (1.5,.85);
    \draw[thick] (2,0) -- (.5,.85);
    \draw[thick] (1,0) -- (1,1.7);
    \end{scope}
    \begin{scope}[shift = {(8,0)},scale = 1.1]
    \filldraw[opacity=.1] (0,0) -- (2,0) -- (1,1.7) -- (0,0) -- (2,0);
    \draw[thick] (0,0) -- (2,0) -- (1,1.7) -- (0,0) -- (2,0);
    \draw[thick] (0,0) -- (1.5,.85);
    \draw[thick] (2,0) -- (.5,.85);
    \draw[thick] (1,0) -- (1,1.7);
    \draw[thick] (1,.25) -- (1.27,.41) -- (1.27,.72) -- (1,.89) -- (.73,.72) -- (.73,.41) -- (1,.25);
    \draw[thick] (0,0) -- (.73,.72) -- (1,1.7);
    \draw[thick] (2,0) -- (1.27,.72) -- (1,1.7);
    \draw[thick] (0,0) -- (1,.25) -- (2,0);
    \end{scope}
    \begin{scope}[shift = {(2,-3)}]
    \filldraw[opacity=.1] (0,0) -- (2,0) -- (2,2) -- (0,2) -- (0,0) -- (2,0);
    \draw[thick] (0,0) -- (2,0) -- (2,2) -- (0,2) -- (0,0) -- (2,0);
    \draw[thick] (0,0) -- (2,2);
    \draw[thick] (2,0) -- (0,2);
    \draw[thick] (1,0) -- (1,2);
    \draw[thick] (0,1) -- (2,1);
    \draw[thick] (0,0) -- (1.5,.5) -- (2,2);
    \draw[thick] (2,0) -- (.5,.5) -- (0,2);
    \draw[thick] (0,0) -- (.5,1.5) -- (2,2);
    \draw[thick] (2,0) -- (1.5,1.5) -- (0,2);
    \end{scope}
    \begin{scope}[shift = {(6,-3)},scale=1.3]
    \filldraw[opacity=.1] (.4,0) -- (1.6,0) -- (2,1.1) -- (1,2) -- (0,1.1) -- (.4,0) -- (1.6,0);
    \draw[thick] (.4,0) -- (1.6,0) -- (2,1.1) -- (1,2) -- (0,1.1) -- (.4,0) -- (1.6,0);
    \draw[thick] (.4,0) -- (2,1.1);
    \draw[thick] (2,1.1) -- (0,1.1);
    \draw[thick] (0,1.1) -- (1.6,0);    
    \draw[thick] (1.6,0) -- (1,2);    
    \draw[thick] (1,2) -- (.4,0);
    
    \draw[thick] (.2,.55) -- (2,1.1);
    \draw[thick] (1,0) -- (1,2);
    \draw[thick] (0,1.1) -- (1.8,.55);
    \draw[thick] (.4,0) -- (1,.8) -- (1.59,1.47);
    \draw[thick] (1.6,0) -- (1,.8) -- (.41,1.47);
    \end{scope}
\end{tikzpicture}
\caption{The five allowable triangulations of links of vertices are obtained by doubling each of these triangulations of a disk.}
\label{Fig:CT links}
\end{figure}
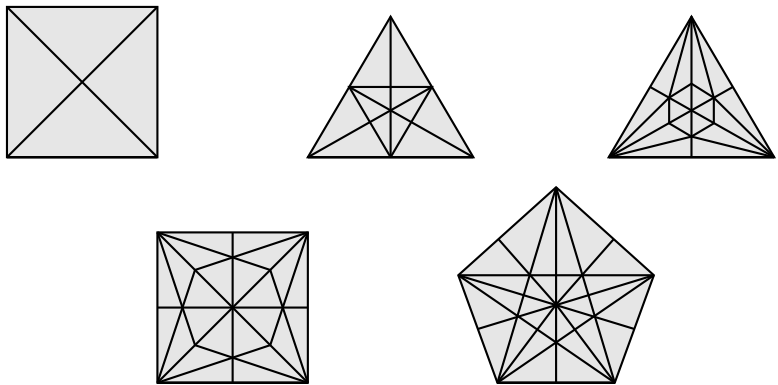

\begin{proposition} \label{Prop:big CT exist}
Suppose $M$ is a closed, orientable $3$--manifold $M$ and $r > 0$.  Then $M$  admits a {\em Cooper-Thurston} triangulation $\tau$ with $\combsys(\hat \tau) > r$.
\end{proposition}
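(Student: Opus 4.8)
The plan is to reduce Proposition~\ref{Prop:big CT exist} to producing Cooper--Thurston triangulations of arbitrarily small mesh, and to obtain these by running the Cooper--Thurston construction on a sufficiently fine triangulation. If $\pi_1(M)$ is trivial there is nothing to prove: no loop is non-null-homotopic, so $\combsys(\hat\tau)=+\infty$ for every Cooper--Thurston triangulation $\tau$, and such triangulations exist by \cite{CoopThur1988}. So assume $\pi_1(M)\neq 1$, fix a Riemannian metric $g$ on $M$, and let $s>0$ be the length of the shortest non-null-homotopic loop in $(M,g)$ (positive since $M$ is compact). For a triangulation $\sigma$ of $M$ write $\mathrm{mesh}_g(\sigma)=\max_{T}\mathrm{diam}_g(\bar T)$, the maximum over closed tetrahedra of $\sigma$.

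First I would prove the comparison
\[ \combsys(\hat\tau)\ \geq\ \frac{s}{\mathrm{mesh}_g(\tau)}. \]
Each edge of $\hat\tau$ joins the barycenter of a tetrahedron $T$ of $\tau$ to the barycenter of one of its faces, and both endpoints lie in the contractible set $\bar T$. Replacing every edge of $\hat\tau$ by a $g$-geodesic between the same two barycenters therefore changes no homotopy class of loop in $\hat\tau$, and each such geodesic has $g$-length at most $\mathrm{mesh}_g(\tau)$. Hence a combinatorial loop in $\hat\tau$ of length $\ell$ is freely homotopic in $M$ to a loop of $g$-length at most $\ell\cdot\mathrm{mesh}_g(\tau)$; if the loop is non-null-homotopic this length is at least $s$, so $\ell\geq s/\mathrm{mesh}_g(\tau)$. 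Taking the minimum over such loops gives the comparison, and it suffices to produce a Cooper--Thurston triangulation $\tau$ of $M$ with $\mathrm{mesh}_g(\tau)<s/r$.

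To do this, fix any triangulation $T_0$ of $M$ and let $T_m$ be its $m$-th barycentric subdivision. In the standard linear realization the meshes of the $T_m$ tend to $0$ (a classical fact), and since the homeomorphism $|T_0|\cong M$ is uniformly continuous we get $\mathrm{mesh}_g(T_m)\to 0$. Run the Cooper--Thurston construction on $T_m$ to obtain a Cooper--Thurston triangulation $\tau_m$ of $M$. The point of the ``minor modification'' of \cite{CoopThur1988} is the observation that there is an absolute constant $c$ with $\mathrm{mesh}_g(\tau_m)\leq c\cdot\mathrm{mesh}_g(T_m)$: their construction modifies $T_m$ only within a bounded iterate of the star of a simplex, and such a region has $g$-diameter at most a bounded multiple of $\mathrm{mesh}_g(T_m)$ --- crucially \emph{regardless} of how many simplices it contains, since every tetrahedron in the star of a simplex of $T_m$ already has $g$-diameter at most $\mathrm{mesh}_g(T_m)$ and meets that simplex. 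Choosing $m$ with $c\cdot\mathrm{mesh}_g(T_m)<s/r$ then yields $\combsys(\hat\tau_m)\geq s/\mathrm{mesh}_g(\tau_m)>r$.

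The step I expect to be the main obstacle is making precise that the Cooper--Thurston construction is, in this sense, local --- that each tetrahedron it produces lies in a bounded-order star of a simplex of the input triangulation --- since this requires actually inspecting their construction rather than invoking it as a black box. Once that is verified, the two geometric inputs (the comparison inequality above and the geometric decay of $\mathrm{mesh}_g(T_m)$) are routine, and the proposition follows as indicated; the remaining ingredient, the mere existence of Cooper--Thurston triangulations, is \cite{CoopThur1988}.
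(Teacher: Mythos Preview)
Your overall strategy---compare the combinatorial systole of $\hat\tau$ to the Riemannian systole via the mesh of $\tau$, and then make the mesh small---is exactly the paper's strategy. The comparison inequality $\combsys(\hat\tau)\geq s/\mathrm{mesh}_g(\tau)$ is correct (with the minor correction that edges of $\hat\tau$ join the barycenter of a tetrahedron to the barycenter of \emph{any} proper face---vertex, edge, or $2$--face---not just a $2$--face; this does not affect the argument).

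The genuine gap is the step you flagged yourself. You propose to obtain small-mesh Cooper--Thurston triangulations by ``running the Cooper--Thurston construction on $T_m$'' and then invoking a locality property: that each output tetrahedron lies in a bounded-order star of an input simplex, so that $\mathrm{mesh}_g(\tau_m)\le c\cdot\mathrm{mesh}_g(T_m)$. But the Cooper--Thurston construction is not a local modification of an input triangulation; it produces a \emph{paving} of $M$ by cubes with prescribed edge degrees (a global, essentially topological step) and then triangulates each cube by a fixed pattern. There is no mechanism that takes an arbitrary fine triangulation and outputs a fine paving with the required degree constraints, so the inequality $\mathrm{mesh}_g(\tau_m)\le c\cdot\mathrm{mesh}_g(T_m)$ is not available in the form you assert.

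The paper sidesteps this entirely. It fixes \emph{one} Cooper--Thurston paving $P$ of $M$, puts the singular Euclidean metric on $M$ making each cube a unit cube, and then subdivides every cube into $k^3$ congruent sub-cubes to obtain a new paving $P_k$. Triangulating each sub-cube by the same $24$--tetrahedron pattern again yields a Cooper--Thurston triangulation $\tau(P_k)$, and now the mesh shrinks by the factor $1/k$ essentially by construction (equivalently, the singular Euclidean systole scales by $k$ while the edge lengths of $\hat\tau(P_k)$ stay bounded by $\sqrt{21}/8$). This is the ``minor modification'': it exploits the specific cubical structure of the construction rather than any general locality principle, and it is what you should replace your barycentric-subdivision step with.
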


\begin{proof} Cooper and Thurston's proof starts with a \textit{paving} of $M$, which is a subdivision of $M$ into $3$--dimensional cubes so that two such cubes either meet at a vertex, edge, or face, or are disjoint. Given an edge $e$ in a cube, the \textit{degree} of $e$ is the number of cubes that have $e$ as an edge. Cooper and Thurston prove there exists a paving of $M$ so that each edge has degree $3$, $4$, or $5$, and the degree $3$ and $5$ edges form disjoint embedded 1-manifolds. They then triangulate each cube to ensure the correct links (see \Cref{Fig:CT cube}). Note that the triangulation of each cube consists of three kinds of edges: the purple edges, which connect a vertex of the cube to the centers of the three faces in which the vertex is contained; the blue edges, which connect the vertices of the cube to the center of the cube; and the red edges, which connect the center of a face to the center of the cube.  Since the resulting triangulation $\tau$ depends on the paving $P$ of $M$, we will denote it $\tau(P)$. As before, $\hat{\tau}(P)$ will be the graph associated with $\tau(P)$ equipped with the length function $\ell_{\hat{\tau}(P)}$ induced by the usual graph metric.

\begin{figure}
    \centering
\includegraphics[width=0.5\linewidth]{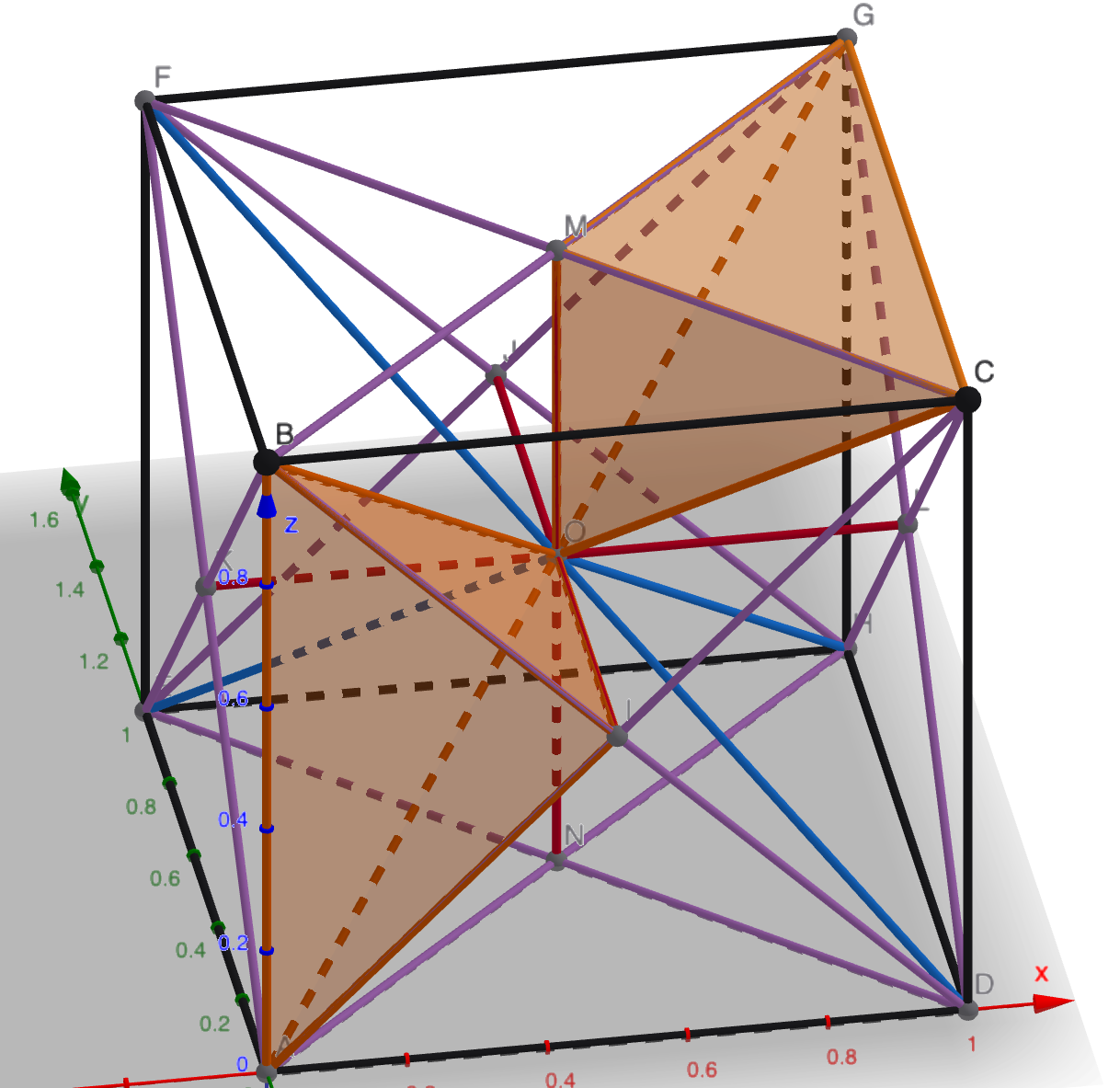}
    \caption{An illustration of the Cooper-Thurston triangulation on a single cube. The cube is triangulated by $24$ tetrahedra, two of which are shown in orange.}
    \label{Fig:CT cube}
\end{figure}

Cooper and Thurston show there is a paving $P$ of $M$ so that the triangulation of each cube as above results in an honest triangulation $\tau(P)$.  For any integer $k \geq 1$, we can subdivide each cube of $P$ into $k^3$ sub-cubes, producing a new paving $P_k$. Triangulating each of the sub-cubes as above results in another Cooper-Thurston triangulation of $M$.  (In fact, the construction in \cite{CoopThur1988} has such a subdivision of the paving built into it.) We will show that for any $r > 0$, we can choose some $k(r) > 0$ sufficiently large so that for the paving $P_{k(r)}$, we have combsys\big($\hat{\tau}(P_{k(r)})\big) > r$.

Given a paving $P$ for which $\tau(P)$ is a Cooper-Thurston triangulation, we can define a singular, locally Euclidean geodesic metric on $M$ in which each cube of $P$ is (locally) isometric to a unit cube in $\mathbb R^3$; that is, a cube where all side lengths equal $1$.  For any path $\alpha$ in $M$, we write $\ell_{P}(\alpha)$ to denote its length in this metric, due to its dependence on $P$. 

Since each edge $e$ of $\hat \tau(P)$ is a path in $M$, we can measure its length, $\ell_P(e)$.  Here we derive an upper bound on $\ell_P(e)$ for any edge $e$ of $\hat{\tau}(P)$. For this, we assume the cube is embedded in $\mathbb R^3$ with vertices at the points
\[ \{(\varepsilon_1,\varepsilon_2,\varepsilon_3) \mid \varepsilon_i \in \{0,1\} \mbox{ for } i =1,2,3 \}.\]
Then the center of $C$ is $\left(\frac{1}{2}, \frac{1}{2}, \frac{1}{2}\right)$, and the center of one of the faces is $\left(\frac{1}{2}, 0, \frac{1}{2}\right)$. One then obtains that the barycenter of the tetrahedon $T$ with vertex set
\[ \left\{(0, 0, 0), (0, 0, 1), \left(\frac{1}{2}, 0, \frac{1}{2}\right), \left(\frac{1}{2}, \frac{1}{2}, \frac{1}{2}\right)\right\}\]
is $\left(\frac{1}{4}, \frac{1}{8}, \frac{1}{2}\right)$. We then have that the maximum distance between a vertex of $T$ and $\left(\frac{1}{4}, \frac{1}{8}, \frac{1}{2}\right)$ is equal to $\frac{\sqrt{21}}{8}$, which is an upper bound for the edge lengths of $\hat{\tau}(P)$. 

When every cube $C$ in a paving $P$ is subdivided into $k^3$ cubes to produce the new paving $P_k$, for any curve $\gamma$, $\ell_{P_k}(\gamma) = k\ell_{P}(\gamma)$. We can see this most clearly when $\gamma$ is an edge of $C$. In the new metric, $s_C$, an edge of $C$, will pass through $k$ cubes each of side length equal to $1 = \ell_{P}(s_C)$, so $\ell_{P_k}(s_C) = k = k\ell_{P}(s_C)$.  

Consider any paving $P$ so that $\tau(P)$ is a Cooper-Thurston triangulation, as before. Let $\gamma$ be a non-null-homotopic curve in $M$ which minimizes $\ell_P(\gamma)$ among all such closed curves $\gamma$.
Find $k(r)$ so that the paving $P_{k(r)}$ obtained by subdividing each cube in $P$ into $(k(r))^3$ cubes will yield $\ell_{P_{k(r)}}(\gamma) > r$, which is made possible by the discussion in the previous paragraph. Observe that $\gamma$ will still minimize length among non-null-homotopic curves with respect to the new metric induced by $P_{k(r)}$, as the new metric is the old metric scaled by a factor of $k(r)$. 

Consider the combinatorial systole of $\hat{
\tau}(P_{k(r)})$, which we will call $\gamma'$. Note that for each edge $e \in \gamma'$, $\ell_{P_{k(r)}}(e) \leq \frac{\sqrt{21}}{8}$ by construction. Thus,
\begin{align*}\text{combsys}(\hat{\tau}(P_{k(r)})) & =  \ell_{\hat{\tau}(P_{k(r)})}(\gamma') = (\text{\# of edges in $\gamma'$})(1)\\
& \geq (\text{\# of edges in $\gamma'$})\bigg(\frac{\sqrt{21}}{8}\bigg)\\
&\geq  \sum\limits_{e \in \gamma'} \ell_{P_{k(r)}}(e) \geq \ell_{P_{k(r)}}(\gamma')\\
& \geq \ell_{P_{k(r)}}(\gamma) > r. \end{align*}
The second to last inequality follows since $\gamma$ minimized length over all non-null-homotopic curves, and $\gamma'$ is non-null-homotopic.
\end{proof}

We will also need the following, which is immediate by inspecting Figure~\ref{Fig:CT links}.
\begin{lemma} \label{Lem:Flag + edge degree}
    Suppose $\tau$ is a Cooper-Thurston triangulation of a closed $3$--manifold $M$.  Then every link of a vertex is flag, and every edge has degree $4$, $6$, $8$, or $10$. \qed
\end{lemma}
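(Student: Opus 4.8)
The plan is to reduce both assertions to a finite inspection of the five triangulated disks $D_1,\dots,D_5$ of Figure~\ref{Fig:CT links} and their doubles $DD_i := D_i \cup_{\partial} D_i$, using two standard translations between the combinatorics of a triangulation $\tau$ of a closed $3$--manifold and that of its vertex links. For edge degrees: if $e = [u,v]$ is an edge of $\tau$, then $\mathrm{lk}_\tau(e)$ is a simplicial circle whose number of edges is exactly the number of tetrahedra of $\tau$ containing $e$, i.e.\ the degree of $e$; and $\mathrm{lk}_\tau(e)$ is canonically the link, inside $\mathrm{lk}_\tau(u)$, of the vertex $\bar e \in \mathrm{lk}_\tau(u)$ corresponding to $e$. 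Hence $\deg(e)$ equals the valence of $\bar e$ in the $1$--skeleton of $\mathrm{lk}_\tau(u)$, and it suffices to show that every vertex of each of the five links $DD_i$ has valence in $\{4,6,8,10\}$. For flag-ness: a triangulated $2$--sphere is flag if and only if it has no empty triangle (three pairwise-adjacent vertices not spanning a $2$--simplex) and no four pairwise-adjacent vertices (there being no $3$--simplex available to fill such a clique), so it suffices to check each $DD_i$ avoids both configurations.

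Next I would transfer these conditions from the doubled spheres back to the disks. In $DD = D \cup_\partial D$, an interior vertex $w$ of $D$ contributes two vertices of $DD$, each of valence $\deg_D(w)$, while a boundary vertex $w$ contributes a single vertex of $DD$ of valence $2\deg_D(w) - 2$ (its link-arc in $D$ is doubled, with the two boundary neighbours identified). Moreover, since $DD$ is two copies of $D$ glued along a common boundary circle, an edge of $DD$ joining two boundary vertices is either a boundary edge or an interior edge of $D$ present in both copies; it follows that every clique of $DD$ either lies inside a single copy of $D$ or is already a clique of $D$, and hence $DD$ contains an empty triangle (resp.\ a $4$--clique) if and only if $D$ does. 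Thus the lemma reduces to checking, for each $i$: that $D_i$ has no empty triangle and no four pairwise-adjacent vertices; that every interior vertex of $D_i$ has valence in $\{4,6,8,10\}$; and that every boundary vertex of $D_i$ has valence in $\{3,4,5,6\}$.

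Finally I would verify these three conditions by going through Figure~\ref{Fig:CT links} disk by disk. For example, $D_1$ is the square with both diagonals and a central vertex: its four corners are boundary vertices of valence $3$, its central vertex is an interior vertex of valence $4$, its $1$--skeleton visibly contains no $K_4$ and every $3$--cycle bounds a face — and indeed $DD_1$ is the octahedron, which is $4$--regular and flag. The remaining four disks, which are the more refined subdivisions of a triangle, a square, and a pentagon drawn in the figure, are treated identically: read the valences off the picture (keeping the interior/boundary distinction straight), and check by inspection that no three pairwise-joined vertices fail to bound a face and no four vertices are pairwise joined. The only real obstacle is bookkeeping — methodically separating interior from boundary vertices and ruling out empty triangles and copies of $K_4$ in each of the five pictures — so no argument beyond direct inspection is needed, which is exactly why the statement is "immediate".
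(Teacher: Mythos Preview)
Your proposal is correct and takes essentially the same approach as the paper: the paper's ``proof'' is literally the phrase ``immediate by inspecting Figure~\ref{Fig:CT links}'' together with the \qed, so direct inspection of the five doubled disks is all that is offered. Your write-up supplies the systematic scaffolding for that inspection---the translation of edge degrees into vertex valences of the link, the doubling formula $2\deg_D(w)-2$ for boundary vertices, and the reduction of flag-ness of $DD$ to the absence of empty triangles and $K_4$'s in $D$---which the paper leaves entirely implicit, but the underlying method is the same finite case check.
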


\begin{remark}
Brady-McCammond-Meier \cite{BraMcCMei2004} describe another construction of triangulations with related bounds on the combinatorics.  It seems likely that the barycentric subdivisions of these triangulations could also be made to work for our purposes below, but constructing Cooper-Thurston triangulations with large combinatorial systole is likely easier.
\end{remark}

\subsection{A hyperbolic tetrahedron tangle} \label{Sec:Tet tangle}

Let $T_0$ be a tetrahedron and $L_0 \subset T_0$ be the tangle in $T_0$, which is the union of the four embedded circles and properly embedded arcs shown on the left in Figure~\ref{Fig:link-tangle tet}.  Explicitly, we assume $T_0$ is a regular, Euclidean tetrahedron with side lengths $1$ and
\begin{enumerate}
    \item each circle has radius $1/4$ and is centered at the barycenter of the face, bounding a disk, and 
    \item each arc is the intersection with $T_0$ of a circle of radius $1/8$ centered on the barycenter of an edge and contained in a plane orthogonal to the edge.
\end{enumerate}
From the assumptions, the tangle meets each face as shown on the right in Figure~\ref{Fig:link-tangle tet}, and the endpoints of the arcs are inside the disks bounded by the circles in the faces.
\begin{remark}
    Our use of the term ``tangle" may be slightly non-standard, but we will only use it in reference to the specific embedded $1$--manifold in $T_0$.
\end{remark}
\begin{center}
\begin{figure}[h]
\begin{tikzpicture}
    \node at (0,0) {\includegraphics[width=5cm]{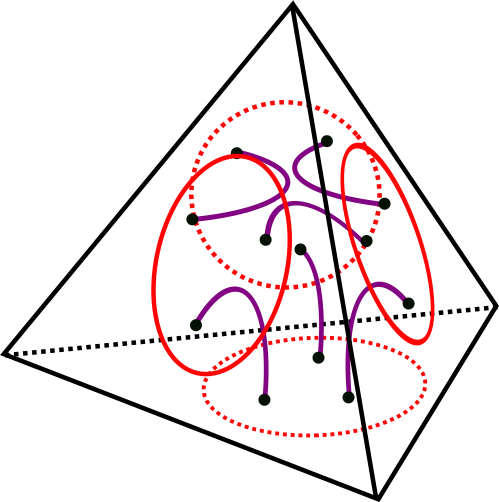}}; 
    \draw[thick] (5,-1.5) -- (9,-1.5) -- (7,1.96) -- (5,-1.5) -- (9,-1.5);
    \draw[thick] (7,-.345) circle (1);
    \filldraw (7,-1) circle (.05);
    \filldraw (6.42,-.01) circle (.05);
    \filldraw (7.58,-.01) circle (.05);
\end{tikzpicture}
\caption{{\bf Left:} The tangle $L_0 \subset T_0$. {\bf Right:} The intersection of $L_0$ with the face.}
\label{Fig:link-tangle tet}
\end{figure}
\end{center}

By construction, the full symmetry group $\Tet \cong S_4$ of $T_0$ acts on $T_0$ by isometries preserving $L_0$.  We can take a fundamental domain,  $\Delta$, in $T_0$ for $\Tet$ to be (any) $2$--simplex of the first barycentric subdivision of $T_0$, and $L_0$ meets $\Delta$ in a pair of arcs contained in a pair of sides. The fundamental domain $\Delta$ and pair of arcs are illustrated in Figure~\ref{Fig:fundamental piece}.

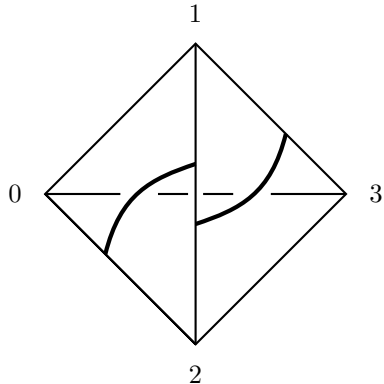
\begin{figure}[h]
\begin{tikzpicture}[scale=2]
    \draw[thick] (0,0) -- (1,-1) -- (2,0) -- (1,1) -- (0,0) -- (1,-1);
    \draw[thick] (0,0) -- (.5,0);
    \draw[thick] (.75,0) -- (.95,0);
    \draw[thick] (1.05,0) -- (1.25,0);
    \draw[thick] (1.5,0) -- (2,0);
    \draw[thick] (1,-1) -- (1,1);
    \draw[ultra thick] (1,.2) .. controls (.7,.1) and (.5,0) .. (.4,-.4);
    \draw[ultra thick] (1,-.2) .. controls (1.3,-.1) and (1.5,0) .. (1.6,.4);
    \node at (2.2,0) {$3$};
    \node at (1,-1.2) {$2$};
    \node at (1,1.2) {$1$};
    \node at (-.2,0) {$0$};
\end{tikzpicture}
\caption{The fundamental domain in $T_0$ for the action of $\Tet$.  The vertices are barycenters of a vertex, edge, face, and of $T_0$, and are labeled here by $0$, $1$, $2$, and $3$, respectively.}
\label{Fig:fundamental piece}
\end{figure}

We consider $T_0 \sss L_0$ as a manifold with corners.  The corners are at the $1$--skeleton of $T_0$, and the boundary consists of the union of the faces minus $L_0$, which are each thus homeomorphic to a $2$--simplex minus a circle and three points in the disk bounded by the circle, as on the right in Figure~\ref{Fig:link-tangle tet}.

\begin{proposition} \label{prop:explicit hyp tet}
    There is a complete hyperbolic structure on $T_0 \sss L_0$ of finite volume, such that all boundary components are totally geodesic and all dihedral angles at the corners are $\frac{\pi}2$.
\end{proposition}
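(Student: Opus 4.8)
The plan is to use the $\Tet\cong S_4$ symmetry to reduce the statement to the existence of a single hyperbolic polyhedron, and then to spread that polyhedron around by the group. Recall from Figure~\ref{Fig:fundamental piece} that a fundamental domain $\Delta$ for $\Tet$ acting on $T_0$ has vertices the barycenters of a vertex, an edge, a face, and of $T_0$, and that $L_0\cap\Delta$ consists of two arcs, one lying on the face of $\Delta$ contained in $\partial T_0$ and one on an interior face of $\Delta$. A complete finite-volume hyperbolic metric on $T_0\sss L_0$ of the required type is the same thing as a realization of $\Delta\sss L_0$ as a finite-volume hyperbolic polyhedron $P$ with geodesic faces such that: the face of $\Delta$ in $\partial T_0$ becomes a piece of totally geodesic boundary; the three interior faces of $\Delta$ become ``mirror'' faces, glued to the copies of $\Delta$ in the adjacent barycentric simplices by the corresponding reflections in $\Tet$; and the ends along the two arcs of $L_0\cap\Delta$ become cusps. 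The first task is to record the dihedral-angle data that $P$ must carry for this to be consistent: along an edge where two mirror faces meet, the dihedral angle must be $\pi/k$, where $2k$ is the order of the subgroup of $\Tet$ stabilizing that edge (so $k$ is small, coming from the $2$- and $3$-fold rotation axes of $T_0$); along the edge lying over an edge of $T_0$ the dihedral angle must be a submultiple of $\pi$ compatible with the asserted value $\tfrac{\pi}{2}$ for the dihedral angle of $T_0\sss L_0$ there; and at each ideal vertex the three incident dihedral angles must sum to $\pi$. The heart of the matter is then to produce a hyperbolic polyhedron $P$ with the combinatorial type of $\Delta$ cut open along $L_0\cap\Delta$ and carrying exactly this labeling.

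I would do this by writing down explicit coordinates: place the ideal vertices at symmetric points of $\partial\mathbb{H}^3$, put the finite vertices on the axis of a cyclic subgroup of $\Tet$, form the geodesic planes spanned by these points, and verify by direct computation that they bound a polyhedron with the prescribed dihedral angles and cusps. Alternatively, existence of $P$ can be obtained from Andreev's theorem in the form allowing ideal vertices, which reduces matters to checking Andreev's (in)equalities for the (small, symmetric) labeled polyhedron above. A helpful guide is that a right-angled hyperbolic polyhedron with a large symmetry group --- e.g.\ a suitable subdivision of the regular ideal octahedron, whose rotation group is $S_4$ --- already shows how right angles and cusps can coexist even though the Euclidean regular tetrahedron has dihedral angle $\arccos\tfrac13\ne\tfrac{\pi}{2}$.

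Granting $P$, the rest is routine. Gluing the $24$ translates $\{g\cdot P: g\in\Tet\}$ along their mirror faces --- equivalently, developing the associated reflection orbifold --- produces a complete hyperbolic structure on $T_0\sss L_0$ by the Poincar\'e polyhedron theorem, whose hypotheses are exactly the $\pi/k$ conditions on mirror edges and the angle-sum $\pi$ conditions at ideal vertices verified above. By construction the four faces of $T_0$ fit together into totally geodesic boundary surfaces, the six edges of $T_0$ become geodesics along which the dihedral angle is $\tfrac{\pi}{2}$, and the ten components of $L_0$ yield annular cusps meeting the totally geodesic boundary; in particular the volume is finite. The step I expect to be the main obstacle is the middle one --- exhibiting a hyperbolic polyhedron realizing the prescribed combinatorics together with the all-$\tfrac{\pi}{2}$ corner data. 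This is genuinely geometric rather than formal: since no hyperbolic tetrahedron is right-angled, the tangle (the circles near the face barycenters and the arcs around the edge barycenters) must be opening up precisely enough room for the right angles, and one has to verify this quantitatively, either through Andreev's inequalities or by exhibiting explicit geodesic planes. Once $P$ is in hand, checking that the developed structure has totally geodesic boundary with $\tfrac{\pi}{2}$ corners and finite volume is bookkeeping with the $\Tet$-stabilizers of the barycentric cells.
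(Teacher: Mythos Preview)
Your outline is precisely the paper's approach: reduce by the $\Tet$ action to the barycentric fundamental domain $\Delta$, collapse the two arcs of $L_0\cap\Delta$ to ideal vertices, and realize the resulting partially ideal polyhedron $P$ with the forced dihedral angles. The paper carries out exactly your option (a), exhibiting $P$ as the intersection of six explicit half-spaces in upper half-space (four bounded by vertical planes, two by hemispheres over circles in $\partial\mathbb H^3$), with one ideal vertex at $\infty$ and one at the origin; the dihedral-angle checks then reduce to elementary angle computations between the bounding lines and circles.

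Two small corrections to your bookkeeping. First, once an arc of $L_0\cap\Delta$ is collapsed, \emph{four} faces of $P$ meet at the resulting ideal vertex: the arc lies interior to one face of $\Delta$ (splitting it in two) and its endpoints lie on two edges of $\Delta$ (pulling in two further faces). So the cusp condition is that the four incident dihedral angles sum to $2\pi$, not that three sum to $\pi$; in the paper's $P$ all four are $\tfrac{\pi}{2}$. Second, the mirror-edge angles are not read off solely from $2$- and $3$-fold rotation axes. Counting how many copies of $\Delta$ meet along each edge inside $T_0$ gives $\tfrac{\pi}{3}$ along both the vertex--center and face-center--center segments (six copies each), $\tfrac{\pi}{2}$ along the edge-midpoint--center segment (four copies), $\tfrac{\pi}{2}$ along the two segments interior to a face of $T_0$, and $\tfrac{\pi}{4}$ along the segment on an edge of $T_0$ (two copies, target corner angle $\tfrac{\pi}{2}$). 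Pinning these down is the only work left before writing $P$ explicitly or, as you note, feeding the data into Andreev's theorem.
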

\begin{proof} We construct a hyperbolic structure on the fundamental domain $\Delta \sss L_0$ explicitly.  To do this, we we will find a complete, finite volume hyperbolic structure on $\Delta \sss L_0$ with totally geodesic boundary, whose dihedral angles along the corners are as illustrated on the left of Figure~\ref{Fig:hyperbolic polyhedron 1}.  By collapsing the two arcs to points, it suffices to find a partially ideal hyperbolic polyhedron as illustrated on the right of Figure~\ref{Fig:hyperbolic polyhedron 1}, where the two ``dots" are ideal (hence deleted).

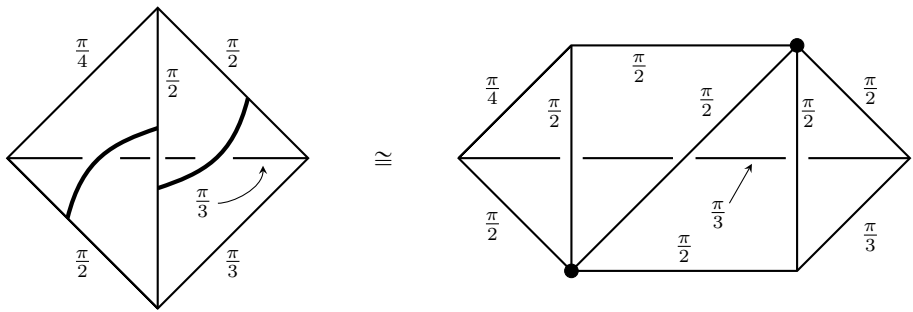
\begin{figure}[h]
\begin{tikzpicture}
    \begin{scope} [scale = 2]
        \draw[thick] (0,0) -- (1,-1) -- (2,0) -- (1,1) -- (0,0) -- (1,-1);
        \draw[thick] (0,0) -- (.5,0);
        \draw[thick] (.75,0) -- (.95,0);
        \draw[thick] (1.05,0) -- (1.25,0);
        \draw[thick] (1.5,0) -- (2,0);
        \draw[thick] (1,-1) -- (1,1);
        \draw[ultra thick] (1,.2) .. controls (.7,.1) and (.5,0) .. (.4,-.4);
        \draw[ultra thick] (1,-.2) .. controls (1.3,-.1) and (1.5,0) .. (1.6,.4);
        \node at (2.5,0) {$\cong$};
        \node at (.5,-.7) {$\frac{\pi}2$};
        \node at (1.1,.5) {$\frac{\pi}2$};
        \node at (1.5,.7) {$\frac{\pi}2$};
        \node at (1.5,-.7) {$\frac{\pi}3$};
        \node at (.5,.7) {$\frac{\pi}4$};
        \node at (1.3,-.3) {$\frac{\pi}3$};
        \draw[->,>=stealth] (1.4,-.3) .. controls (1.5,-.3) and (1.7,-.2) .. (1.7,-.05);
    \end{scope}
    \begin{scope}[shift = {(6,0)},scale=1.5]
        \draw [thick] (0,0) -- (1,1) -- (3,1) -- (4,0) -- (3,-1) -- (1,-1) -- (0,0) -- (1,1);
        \draw [thick] (1,-1) -- (3,1);
        \draw [thick] (1,1) -- (1,-1);
        \draw [thick] (3,1) -- (3,-1);
        \draw [thick] (0,0) -- (.9,0);
        \draw [thick] (1.1,0) -- (1.9,0);
        \draw [thick] (2.1,0) -- (2.9,0);
        \draw [thick] (3.1,0) -- (4,0);
        \filldraw (1,-1) circle (.06);
        \filldraw (3,1) circle (.06);
        \node at (.3,-.6) {$\frac{\pi}2$};
        \node at (.85,.4) {$\frac{\pi}2$};
        \node at (1.6,.8) {$\frac{\pi}2$};
        \node at (2,-.8) {$\frac{\pi}2$};
        \node at (2.2,.5) {$\frac{\pi}2$};
        \node at (3.1,.4) {$\frac{\pi}2$};
        \node at (3.65,.6) {$\frac{\pi}2$};
        \node at (3.65,-.7) {$\frac{\pi}3$};
        \node at (.3,.6) {$\frac{\pi}4$};
        \node at (2.3,-.5) {$\frac{\pi}3$};
        \draw[->,>=stealth] (2.4,-.4) -- (2.6,-.05);
        \end{scope}
\end{tikzpicture}
\caption{{\bf Left:} Dihedral angles for hyperbolic structure on $\Delta \sss L_0$. {\bf Right:} Partially ideal polyhedron $P$ with ideal vertices obtained by collapsing arcs to a point (illustrated by a dot).}
\label{Fig:hyperbolic polyhedron 1}
\end{figure}

We can construct such a polyhedron $P \subset \mathbb H^3$ explicitly as the intersection of hyperbolic half-spaces in the upper half space model $\mathbb H^3 = \{(x,y,z) \mid z > 0 \}$.  In these coordinates, the polyhedron is given by the set of points $(x,y,z) \in \mathbb H^3$ satisfying
\begin{enumerate}
    \item $0 \leq x \leq 3 +\frac{3\sqrt{2}}{2}$,
    \item $0 \leq y \leq 1 +\frac{\sqrt{2}}2$, 
    \item $x^2+(y-1)^2 + z^2 \geq 1$, and
    \item $(x-(2+\sqrt{2}))^2 + y^2 + z^2 \geq (2+\sqrt{2})^2.$
\end{enumerate}
One ideal vertex is at $\infty$ and the other is at $(0,0,0)$.  Figure~\ref{Fig:hyperbolic polyhedron 2} shows the polyhedron viewed from the vertex at infinity.  

\begin{figure}[h]
\begin{tikzpicture}
    \begin{scope}
    \draw [->,>=stealth] (-1,0) -- (7,0);
    \draw [->,>=stealth] (0,-.6) -- (0,3.5);
    \filldraw [opacity = .2] (0,0) -- (5.1213,0) -- (5.1212,1.7071) -- (0,1.7071) -- (0,0);
    \draw [ultra thick] (0,0) -- (5.1213,0) -- (5.1212,1.7071) -- (0,1.7071) -- (0,0);
    \draw [ultra thick] (0,0) -- (.52,1.7071);
    \draw[thick,dotted] [domain=0:360] plot ({cos(\x)}, {1+sin(\x)});
    \draw[thick,dotted] [domain=-10:190] plot ({3.4142+3.4142*cos(\x)},{3.4142*sin(\x)});
    \end{scope}
\end{tikzpicture}
\caption{View of $P \subset \mathbb H^3$ from above.}
\label{Fig:hyperbolic polyhedron 2}
\end{figure}
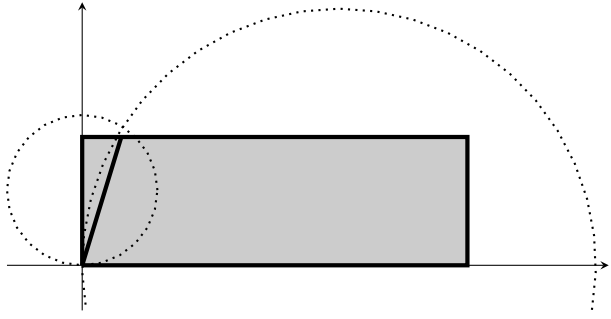

The hyperbolic half-spaces are bounded by hyperbolic planes that meet the sphere at infinity in four lines and two circles.  The equations defining these lines and circles are obtained by making the inequalities above into equations, and setting $z=0$.
One can directly check that the lines and circles intersect in the required angles, and hence so do the hyperbolic planes.
\end{proof}

\subsection{Construction of links and proof of Theorem~\ref{thm:unbounded essential systole}} \label{S:final construction}

Given a triangulation $\tau$ of a closed $3$--manifold, assume that each simplex is regular Euclidean with side length $1$ and the face gluings are by isometries. The copies of $L_0$ in each tetrahedron match up to define a link we denote $L_{\tau} \subset M$.  That is, $L_\tau \subset M$ is a link such that for every tetrahedron $T$ in $\tau$, the pair $(T,L_{\tau} \cap T)$ is homeomorphic to $(T_0,L_0)$.

The first fact we will need is the following.

\begin{lemma} \label{Lem:CT implies hyperbolic}
    If $\tau$ is a Cooper-Thurston triangulation of $M$, then $L_{\tau}$ is a hyperbolic link.
\end{lemma}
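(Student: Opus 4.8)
The plan is to build an explicit singular hyperbolic metric on $M \sss L_\tau$ by gluing together copies of the tetrahedron-tangle complement furnished by Proposition~\ref{prop:explicit hyp tet}, to verify that this metric is locally $\CAT(-1)$, and then to invoke Thurston's Hyperbolization Theorem \cite{Thurston-hyp}. Since $(T, L_\tau \cap T) \cong (T_0, L_0)$ for every tetrahedron $T$ of $\tau$, I would equip each $T \sss L_\tau$ with the complete, finite-volume hyperbolic metric on $T_0 \sss L_0$ having totally geodesic boundary and all corner dihedral angles $\tfrac\pi2$. Because the symmetry group $\Tet \cong S_4$ acts on $(T_0, L_0)$ by isometries and realizes the full symmetric group on each face, every combinatorial face-pairing of $\tau$ is realized by an isometry between the corresponding totally geodesic boundary faces; gluing by these isometries yields a complete, finite-volume path metric on $M \sss L_\tau$ that is smooth and hyperbolic away from the $1$--skeleton $\tau^{(1)}$ of $\tau$, all of whose ends are torus cusps (using here that $M$ is closed and orientable).

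The next step is to analyze the metric along $\tau^{(1)}$. Along the interior of an edge $e$ of $\tau$, the metric has a cone singularity whose cone angle is the sum of the dihedral angles contributed by the incident tetrahedra, namely $\tfrac\pi2\deg(e)$; by Lemma~\ref{Lem:Flag + edge degree}, $\deg(e)\in\{4,6,8,10\}$, so every such cone angle is $\geq 2\pi$. At a vertex $v$ of $\tau$, one first checks that $v$ is a finite vertex of each incident hyperbolic piece: three distinct geodesic planes of $\mathbb H^3$ meeting pairwise at angle $\tfrac\pi2$ always intersect in a single finite point (a short computation in the upper half-space model, after normalizing two of the planes to be vertical). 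Hence the link of $v$ in each $T_0 \sss L_0$ is a spherical triangle with all angles $\tfrac\pi2$, i.e.\ an all-right spherical triangle, and the link of $v$ in $M \sss L_\tau$ is the all-right piecewise-spherical complex carried by the triangulation of $\mathrm{lk}(v)$. This triangulation is flag by Lemma~\ref{Lem:Flag + edge degree}, so by Gromov's lemma the link is $\CAT(1)$. Combining this with the edge cone-angle bound, the standard gluing criterion for nonpositively curved metric complexes shows the glued metric is locally $\CAT(-1)$ (equivalently, its universal cover is $\CAT(-1)$), and it remains complete, of finite volume, with only rank-two torus cusps.

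Finally, the existence of such a metric yields, by standard arguments, that $M \sss L_\tau$ is aspherical, hence irreducible, contains no essential torus, and is not Seifert fibered (it has several cusps and is not an interval bundle). Therefore the compact manifold obtained from $M$ by deleting an open tubular neighborhood of $L_\tau$ is an irreducible, atoroidal, non-Seifert-fibered $3$--manifold with nonempty toroidal boundary, and Thurston's Hyperbolization Theorem \cite{Thurston-hyp} gives a complete, finite-volume hyperbolic metric on its interior $M \sss L_\tau$; that is, $L_\tau$ is a hyperbolic link.

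The main obstacle I anticipate is the cone-manifold bookkeeping along $\tau^{(1)}$: correctly identifying the induced metric on each vertex link as the all-right piecewise-spherical complex on a flag triangulation (so Gromov's lemma applies), cleanly combining this with the edge cone-angle computation into a local $\CAT(-1)$ statement, and checking that the assembled metric is genuinely complete, of finite volume, and has only torus cusps, so that Thurston's theorem applies in the form used.
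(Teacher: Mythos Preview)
Your proposal is correct and follows essentially the same route as the paper: endow each tetrahedron-tangle complement with the hyperbolic structure of Proposition~\ref{prop:explicit hyp tet}, glue by isometries, use the $\tfrac{\pi}{2}$ dihedral angles together with the flag condition on vertex links (Lemma~\ref{Lem:Flag + edge degree}) to conclude the metric is locally $\CAT(-1)$, and then invoke Thurston's Hyperbolization Theorem. You have simply unpacked in more detail (edge cone angles, Gromov's lemma on the all-right vertex links, the irreducible/atoroidal verification) what the paper records in a single sentence with a citation to \cite{BridHaef1999}.
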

\begin{proof}
    Fix the hyperbolic structure on $T_0 \sss L_0$ from Proposition~\ref{prop:explicit hyp tet}.  Then $M \sss L_{\tau}$ can be obtained by gluing copies of the hyperbolic structure on $T_0 \sss L_0$  by isometries along the boundary, defining a finite volume, piecewise hyperbolic structure.  Since the dihedral angles of all corners are $\frac{\pi}2$, and since the link of every vertex is flag, by Lemma~\ref{Lem:Flag + edge degree}, it follows that the metric is locally $\CAT(-1)$; see \cite{BridHaef1999}.  Consequently, the universal cover is $\CAT(-1)$.  By Thurston's Hyperbolization Theorem for (the interiors of) compact manifolds with non-empty boundary, it follows that $M \sss L_{\tau}$ is hyperbolic; see \cite{Thurston-hyp,Morgan,McMullen}.
\end{proof}

Continue to assume that $\tau$ is a Cooper-Thurston triangulation.  We let $\mathcal V_\tau$, $\mathcal E_\tau$, $\mathcal F_\tau$, and $\mathcal T_\tau$ denote the set of vertices, edges, faces, and tetrahedra of $\tau$.  We now describe a canonical collection of surfaces $\mathcal S_\tau$ associated to $\tau$.
This collection of surfaces is indexed by $\mathcal E_\tau \sqcup \mathcal F_\tau \sqcup \mathcal T_\tau$, as follows (see \Cref{fig:surfaces}):
    \begin{enumerate}
    \item For every $F \in \mathcal F_\tau$, $\Sigma_F$ is the disk bounded by the component of $L_\tau$ embedded in $F$.
    \item For every $E \in \mathcal E_\tau$, there is a component of $L_\tau$ that encircles $E$, built from a subset of the arcs of $L_\tau$ intersected with the tetrahedra containing $E$. The surface $\Sigma_E$ is the disk bounded by this link component.
    \item For every $T \in \mathcal{T}_{\tau}$, let $F_1,\ldots,F_4$ be the faces of $T$.  The surface $\Sigma_{T}$ is obtained by ``pushing" $T \sss \{\Sigma_{F_1} \cup \Sigma_{F_2} \cup \Sigma_{F_3} \cup \Sigma_{F_4}\}$ into the interior of $T$.
    \end{enumerate}

\begin{figure}[H]
    \centering
    \begin{tikzpicture}
    \node at (0,0) {\includegraphics[width=\linewidth]{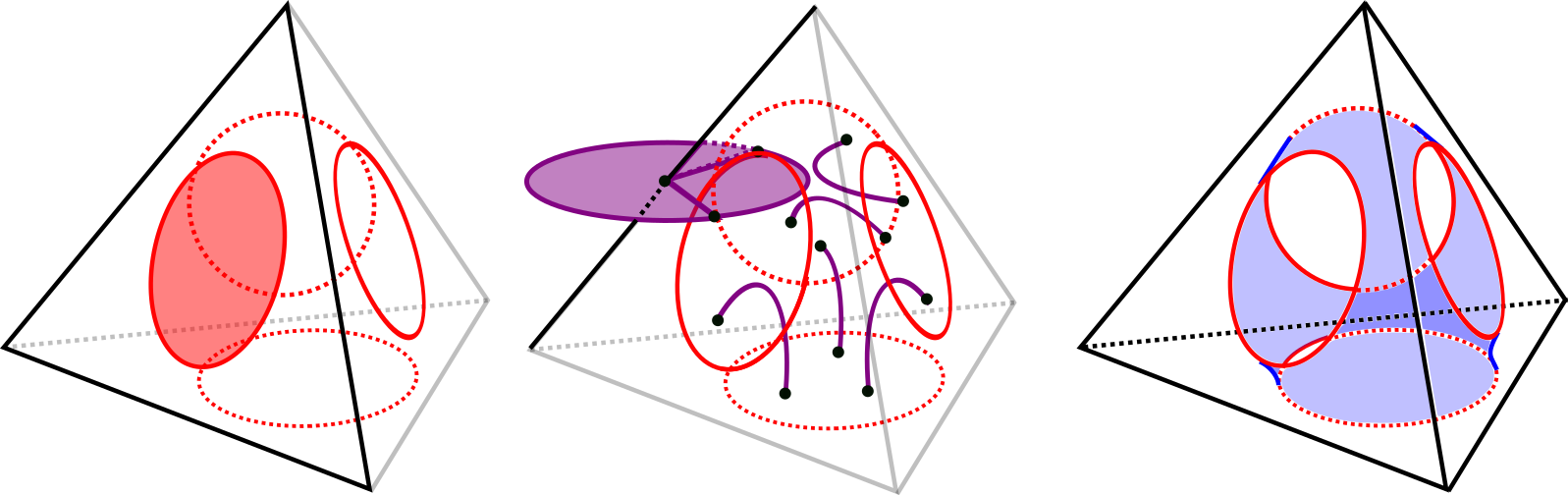}};
    \node at (-4.5,-.2) {$\Sigma_F$};
    \node at (-5.5,-.5) {$F$};
    \node at (-1.4,.55) {$\Sigma_E$};
    \node at (-.3,1.6) {$E$};
    \node at (4.6,.86) {$\Sigma_T$};
    \node at (5.8,1) {$T$};
    \end{tikzpicture}
    \caption{\textbf{Left:} A face $F$ of a tetrahedron and the associated surface $\Sigma_F$. \textbf{Middle:} An edge $E$ of a tetrahedron, and the associated surface $\Sigma_E$. \textbf{Right:} The four-holed sphere $\Sigma_T$ associated with a tetrahedron $T$. For clarity, $L_\tau$ is only partially shown in the left and right figures.}
    \label{fig:surfaces}
\end{figure}

Let $\Sigma_x^\circ$ denote the intersection of $\Sigma_x$ with $M \sss L_\tau$,
\[ \Sigma_x^\circ = \Sigma_x \cap M \sss L_\tau\]
for all $\Sigma_x \in \mathcal S_\tau$.  We also write
\[ \mathcal S_\tau^\circ = \{ \Sigma_x^\circ \mid \Sigma_x \in \mathcal S_\tau \}.\]
By inspection, we see that each $\Sigma_x^\circ \in \mathcal S_\tau^\circ$ is a punctured sphere.  More precisely, we have the following.
\begin{enumerate}
    \item For every $F \in \mathcal F_\tau$, $\Sigma_F^\circ$ is a four-punctured sphere.
    \item For every $E \in \mathcal E_\tau$, $\Sigma_E^\circ$ is a $(k+1)$--punctured sphere, where $k$ is the degree of the edge $E$.
    \item For every $T \in \mathcal T_\tau$, $\Sigma_T^\circ$ is a four punctured sphere.
\end{enumerate}
In particular, the number of punctures is uniformly bounded (at most 11) by Lemma~\ref{Lem:Flag + edge degree}.
\begin{remark}
    We emphasize that, except for $x = T$, $\Sigma_x^\circ$ is not the interior of $\Sigma_x$ since there are other components of $L_\tau$ that puncture $\Sigma_x$.  We also note that the surface $\Sigma_x$ we have described naturally intersects $L_\tau$ minimally in the isotopy class, rel $\partial \Sigma_x$ (which can be seen by considerations of the algebraic intersection number, for example).
\end{remark}

We begin with some basic properties of these surfaces.

\begin{lemma} \label{Lem:totally geodesic CAT-1}
    Suppose $\tau$ is a Cooper-Thurston triangulation.  Then each $\Sigma_x^\circ \in \mathcal S_\tau^\circ$ is totally geodesic in the locally $\CAT(-1)$ metric on $M \sss L_\tau$.  In particular, each such $\Sigma_x^\circ$ is incompressible and quasi-Fuchsian in the hyperbolic structure.
\end{lemma}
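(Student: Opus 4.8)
The plan is to check total geodesy locally, using the symmetries of the model block $(T_0,L_0)$, and then deduce incompressibility and quasi-Fuchsianness from standard facts. Since being totally geodesic is a local property that passes to codimension--$0$ subsurfaces, it suffices to realize each $\Sigma_x^\circ$, near each of its points, as a piece of a totally geodesic subspace. Recall the locally $\CAT(-1)$ metric is built by gluing copies of the hyperbolic structure on $T_0\sss L_0$ of Proposition~\ref{prop:explicit hyp tet} along the totally geodesic faces (cf.\ Lemma~\ref{Lem:CT implies hyperbolic}); since those faces meet along the $1$--skeleton of $T_0$ at dihedral angle $\tfrac\pi2$ and every edge of $\tau$ has degree in $\{4,6,8,10\}$ (Lemma~\ref{Lem:Flag + edge degree}), the resulting metric is smooth hyperbolic off the $1$--skeleton of $\tau$ and has a cone singularity of angle $\tfrac\pi2\deg(E)\ge 2\pi$ along each edge $E$; in particular each face of $\tau$, minus $L_\tau$, is an embedded totally geodesic surface. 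The disks $\Sigma_F^\circ$ are then immediate: $\Sigma_F$ lies in the face $F$, bounded by the circle of radius $\tfrac14$ about the barycenter, hence misses the $1$--skeleton, so $\Sigma_F^\circ$ is a codimension--$0$ subsurface of the totally geodesic surface $F\sss L_\tau$.

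For $\Sigma_E^\circ$, with $E=\{v_1,v_2\}$, I would realize $\Sigma_E$ inside the fixed locus of a reflection. For each tetrahedron $T\supset E$, the transposition $\iota_T\in\Tet\cong S_4$ exchanging $v_1,v_2$ acts isometrically on the hyperbolic structure on $T_0\sss L_0$ (built $\Tet$--equivariantly and preserving $L_0$), and its fixed set $\Pi_T$ is the perpendicular bisector plane of $E$; this plane contains the arc of $L_0$ that encircles $E$ in $T$, and meets each of the two faces of $T$ through $E$ in a geodesic (the altitude from the opposite vertex, in the Euclidean model). Because $\iota_T$ preserves such a face $F$ and acts there as the reflection of the totally geodesic surface $F\sss L_\tau$ in the geodesic $\Pi_T\cap F$, the involutions $\{\iota_T\}_{T\supset E}$ agree along shared faces and glue to an isometric involution $\iota$ of a neighborhood of $\Pi_E:=\bigcup_{T\supset E}\Pi_T$ in $M\sss L_\tau$ --- intrinsically, the reflection of the (possibly singular) cone along $E$ in the plane perpendicular to $E$ at its midpoint. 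A tangent-space eigenvalue computation at a point of a face shows $\Pi_T\perp F$, so the pieces $\Pi_T,\Pi_{T'}$ meeting across a common face lie in a single geodesic plane; hence the fixed locus of $\iota$, which is $\Pi_E\sss L_\tau$, is totally geodesic, and so is its codimension--$0$ subsurface $\Sigma_E^\circ$ (the disk $\Sigma_E\subset\Pi_E$ bounded by the link component $\partial\Sigma_E$, with $L_\tau$ deleted --- a $(\deg(E)+1)$--punctured sphere whose interior contains the center $m_E$ of $\Pi_E$).

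The case $\Sigma_T^\circ$ is the main obstacle. Up to isotopy in $M\sss L_\tau$, $\Sigma_T^\circ$ is the union, over the four faces $F_i$ of $T$, of the ``outer'' annular part of $F_i\sss L_\tau$ (the complement of the disk $\Sigma_{F_i}$), glued along the six edges of $T$; it is pushed off into the interior of $T$ exactly because this union is shared with the neighboring tetrahedra. The difficulty is that along each of those six edges $T$ has dihedral angle $\tfrac\pi2$, so on the nose this surface carries $\tfrac\pi2$--creases and is \emph{not} totally geodesic --- one must instead produce a totally geodesic representative of its isotopy class. This seems to require a more hands-on analysis of the $\CAT(-1)$ structure than the reflection argument above, for instance locating the representative inside the polyhedral decomposition of $M\sss L_\tau$ into copies of the polyhedron $P$ from the proof of Proposition~\ref{prop:explicit hyp tet}. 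Checking that this straightening can be carried out compatibly over all tetrahedra, while keeping the collection $\mathcal S_\tau^\circ$ pairwise disjoint, is the step I expect to be the most delicate.

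Finally, granting that each $\Sigma_x^\circ$ is totally geodesic: its preimage in the universal cover is convex in the $\CAT(-1)$ metric, hence simply connected, so $\pi_1\Sigma_x^\circ\hookrightarrow\pi_1(M\sss L_\tau)$ is injective and $\Sigma_x^\circ$ is incompressible; moreover convexity, transported along the $\pi_1$--equivariant quasi-isometry to the hyperbolic metric, makes $\pi_1\Sigma_x^\circ$ relatively quasiconvex --- equivalently geometrically finite --- in the finite-covolume Kleinian group $\pi_1(M\sss L_\tau)$. A geometrically finite surface subgroup of a Kleinian group is quasi-Fuchsian unless it has an accidental parabolic; but any non-peripheral class on $\Sigma_x^\circ$ is represented by a closed geodesic of the totally geodesic (hence $\CAT(-1)$) surface $\Sigma_x^\circ$, which is then a closed geodesic of $M\sss L_\tau$, whereas parabolic elements have no closed geodesic representative. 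Hence $\Sigma_x^\circ$ has no accidental parabolics and is quasi-Fuchsian.
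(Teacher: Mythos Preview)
Your treatment of $\Sigma_F^\circ$ and $\Sigma_E^\circ$ is essentially the paper's: faces are totally geodesic by construction, and for edges one uses the reflection swapping the endpoints of $E$, whose fixed set contains $\Sigma_E^\circ$. Your concluding paragraph (incompressible $\Rightarrow$ quasi-Fuchsian via quasi-isometric embedding of the universal cover) is also in the same spirit as the paper, though the paper phrases it more briefly in terms of limit sets being quasi-circles.

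The genuine gap is exactly where you flag it: the case $x=T$. You correctly observe that the ``obvious'' representative---the outer shell of $\partial T$---has $\tfrac\pi2$ creases along the edges and is not totally geodesic, and you leave the straightening as an open-ended task. The paper bypasses any hands-on straightening with a rigidity argument you are missing. Namely: first check directly that $\Sigma_{T_0}^\circ$ is incompressible in $T_0\sss L_0$ (a compressing disk would split the four-holed sphere into two pairs of pants, yielding annuli between distinct cusps, impossible). Then use the orientation-preserving subgroup $A_4<\Tet$, which acts isometrically on $T_0\sss L_0$ and preserves the isotopy class of $\Sigma_{T_0}^\circ$. The quotient of $\Sigma_{T_0}^\circ$ in the orbifold $(T_0\sss L_0)/A_4$ is a sphere with one puncture and cone points of orders $2$ and $3$; this orbifold has a \emph{unique} complete hyperbolic structure (its Teichm\"uller space is a point), so by Adams' theorem on rigid cusped surfaces in hyperbolic $3$--manifolds it is isotopic to a totally geodesic suborbifold. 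Lifting back, $\Sigma_{T_0}^\circ$ has a totally geodesic representative in $T_0\sss L_0$, hence in $M\sss L_\tau$. This is the missing idea: rather than locating the geodesic representative explicitly, one forces its existence by passing to a rigid quotient.
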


Recall that a surface $\Sigma$ with negative Euler characteristic which is properly embedded in a $3$-manifold $M$ is \textit{incompressible} if the inclusion $i: \Sigma  \hookrightarrow M$ induces an injective map between fundamental groups. An incompressible surface $\Sigma$ in a hyperbolic $3$--manifold is \textit{quasi--Fuchsian} if its fundamental group has a quasi-circle as its limit set for the action on the sphere at infinity of $\mathbb H^3$.

\begin{proof}
We consider $\Sigma_x^\circ$ for $x \in \mathcal E_\tau \cup \mathcal F_\tau \cup \mathcal T_\tau$ separately, depending on whether $x$ is an edge, face, or tetrahedron.

The statement is clear for $x = F \in \mathcal F_\tau$, by construction of the hyperbolic structure on $T_0 \sss L_0$.  For $x = E \in \mathcal E_\tau$, note that if we just glue together the tetrahedra around $E$, then there is an isometric involution fixing $\Sigma_E$ pointwise, and hence $\Sigma_E^\circ$ is totally geodesic in this union of tetrahedra, and hence in $M \sss L_\tau$ (with the locally $\CAT(-1)$ metric).

For $x = T \in \mathcal T_\tau$, we observe that $\Sigma_T
$ is entirely contained in $T$, so we may consider the case of $\Sigma_{T_0}^\circ \subset T_0\sss L_0$.  First we note that $\Sigma_{T_0}^\circ$ is incompressible: the boundary of a compressing disk would necessarily subdivide the four-holed sphere into two pairs of pants, and compressing would produce a pair of annuli between distinct cusps, which is impossible.  The orientation preserving subgroup $\Tet$ preserves (the isotopy class of) $\Sigma_{T_0}^\circ$, and the quotient of $\Sigma_{T_0}^\circ$ in $T_0 \sss L_0$ is an orbifold with one puncture and two cone points of order $2$ and $3$.  In particular, the orbifold must be totally geodesic (c.f.~\cite{adams}), and hence $\Sigma_{T_0}^\circ$ is totally geodesic.

Being totally geodesic in the locally $\CAT(-1)$ metric implies that the surfaces are incompressible.  The universal covers of the surfaces are isometrically embedded in the universal cover of $M \sss L_\tau$ with its $\CAT(-1)$ metric.  Since the identity on the universal covers is a quasi-isometry with respect to the $\CAT(-1)$ metric on the domain and the hyperbolic metric on the range, it follows that the universal covers of the surfaces are quasi-isometrically embedded in $\mathbb H^3$.  In particular, their limit sets are quasi-circles (c.f.~\cite[Theorem III.H.3.9]{BridHaef1999}); hence, the surfaces are quasi-Fuchsian.
\end{proof}

We record the following.

\begin{corollary} \label{Cor:intersection pattern}
    The totally geodesic representatives of the surfaces in $\mathcal S_\tau^\circ$ with respect to the $\CAT(-1)$ metric have the following property: For any two distinct $\Sigma_x^\circ,\Sigma_y^\circ \in \mathcal S_\tau^\circ$, either $\Sigma_x^\circ$ and $\Sigma_y^\circ$ are disjoint, or they cannot be isotoped to be disjoint and $\{x,y\}$ is a pair $\{E,F\}$ with $E \subset F$ or $\{E,T\}$ with $E \subset T$.  In the latter case, the surfaces intersect in a single arc. \qed
\end{corollary}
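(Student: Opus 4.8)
The plan is to reduce the statement to a finite, essentially combinatorial check, using that totally geodesic surfaces are in minimal position. The key general fact is that two totally geodesic, properly embedded, incompressible surfaces in a complete locally $\CAT(-1)$ manifold meet in a geodesic $1$--manifold, no component of which can cobound a bigon with either surface (geodesics in a $\CAT(-1)$ space do not bound bigons); by the bigon criterion they realize the minimal geometric intersection over their isotopy classes. Hence the geodesic representatives of $\Sigma_x^\circ$ and $\Sigma_y^\circ$ in the $\CAT(-1)$ metric on $M\sss L_\tau$ can be isotoped to be disjoint exactly when they \emph{are} disjoint, and when they are not, their intersection realizes the minimal number of components. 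So it suffices to analyze the combinatorially defined surfaces of \Cref{fig:surfaces} up to isotopy in $M\sss L_\tau$.

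The second step is to record the ``support'' of each surface in the triangulated $M$: $\Sigma_F$ lies in the $2$--simplex $F$ as a round disk about the barycenter of $F$ (away from $\partial F$); $\Sigma_E$ may be isotoped to a meridian disk of $E$ supported in a small regular neighborhood $N(E)$ of $E$ away from the endpoints of $E$ (its boundary being the tangle component around $E$, built from arcs near the midpoint of $E$); and $\Sigma_T$ lies in the interior of $T$, isotoped off of $\partial T$. The case analysis by the types of $x$ and $y$ is then immediate. If $x,y$ are both faces, both edges, or both tetrahedra, the supports are disjoint, since distinct simplices of a common dimension meet only in lower-dimensional faces, which the supports avoid. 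If $\{x,y\}=\{F,T\}$, then $\Sigma_T$ is pushed off $\partial T$, hence off $\Sigma_F$, whether or not $F$ is a face of $T$. If $\{x,y\}=\{E,F\}$ with $E\not\subset F$, or $\{x,y\}=\{E,T\}$ with $E\not\subset T$, then $N(E)$ meets $F$ (resp.\ $T$) at most near a common endpoint of $E$, which the meridian disk avoids. This leaves exactly the two families $\{E,F\}$ with $E\subset F$ and $\{E,T\}$ with $E\subset T$.

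For these two families I would work directly in the explicit model $T_0\sss L_0$ of \S\ref{Sec:Tet tangle}, exploiting the $\Tet$--symmetry. For $E\subset F$, with $F$ a face of a tetrahedron $T$: the tangle component around $E$ meets $F$ transversally in a single point lying inside the disk bounded by the face circle of $F$ — this is precisely the property, built into the definition of $L_0$, that the arc endpoints lie inside the face disks — so $\partial\Sigma_E$ pierces $\Sigma_F$ and the two surfaces cannot be isotoped apart (equivalently, at the corresponding cusp torus of $M\sss L_\tau$ the surface $\Sigma_E^\circ$ exits along a curve parallel to that link component while $\Sigma_F^\circ$ exits along a meridian of it, so the two geodesic surfaces are asymptotic to that cusp along transverse slopes and must intersect). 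Inspection of the model shows that $\Sigma_E$ and $\Sigma_F$ meet in a single arc in each of the two tetrahedra containing $F$, and that these pieces match up along $F$ into one properly embedded arc. The family $\{E,T\}$ with $E\subset T$ is even more transparent: $\Sigma_E\cap T$ is a meridian disk of $E$ running between the two faces of $T$ containing $E$, so it must cross the pushed-off four-holed sphere $\Sigma_T$, and the model again shows the intersection is a single arc (running between the cusps of $\Sigma_T^\circ$ at those two faces). The main obstacle is this last point in each case — ruling out additional, parallel arcs of intersection — which I would settle by the explicit model together with an algebraic intersection number computation, as suggested in the remark preceding the corollary.
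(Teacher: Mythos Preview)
Your proposal is correct and follows the same reasoning the paper leaves implicit: the corollary is stated with an immediate \qed and no separate proof, the paper remarking only that the totally geodesic representatives ``intersect in the `obvious' way.'' Your case analysis by simplex type, together with the cusp-slope argument for the two essential families, is exactly the inspection the paper intends; note that by Lemma~\ref{Lem:totally geodesic CAT-1} the combinatorially defined $\Sigma_F^\circ$ and $\Sigma_E^\circ$ are themselves the totally geodesic representatives (and $\Sigma_T^\circ$ lies inside $T$), so you can read off the intersection pattern directly without first passing through isotopy.
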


This corollary asserts that the intersections of the totally geodesic surfaces in $\CAT(-1)$ metric in the isotopy classes of the surfaces in $\mathcal S_\tau^\circ$ intersect in the ``obvious" way.  We use these representatives to prove the next lemma. Before we do so, we recall that an intersection point $x$ of curve $\gamma$ with a properly embedded incompressible surface $S \subset N$ in a hyperbolic $3$--manifold $N$ is {\em essential} if there is a lift $\tilde{\gamma} \colon \mathbb R \to \mathbb H^3$ of $\gamma$ to the universal cover of $N$ that intersects a component $\widetilde{S}$ of the preimage of $S$ in a single point $\tilde x$ that projects to $x$.
If there is an essential intersection point between $\gamma$ and $S$, we say that they {\em intersect essentially}, and note that in this case, $\gamma$ and $S$ cannot be homotoped to be disjoint.

\begin{lemma} \label{Lem:snaking through}
    If $\gamma$ is a loop in $M \sss L_\tau$ which is non-null-homotopic in $M$, then $\gamma$ essentially intersects at least $n = \lfloor \tfrac{\combsys(\hat \tau)}{16} \rfloor$ surfaces
    \[ \Sigma_{x_0}^\circ,\ldots,\Sigma_{x_{n-1}}^\circ \in \mathcal S_\tau^\circ.\] 
    Moreover, for all $i \neq j$, the cusps of $\Sigma_{x_i}^\circ$ and $\Sigma_{x_j}^\circ$ are not contained in any common cusps of $M \sss L_\tau$.
\end{lemma}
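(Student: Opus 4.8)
The goal is to show that a loop $\gamma$ in $M \sss L_\tau$ which is non-null-homotopic in $M$ essentially intersects at least $n = \lfloor \combsys(\hat\tau)/16 \rfloor$ of the surfaces in $\mathcal S_\tau^\circ$, with the intersected surfaces having cusps lying in pairwise distinct cusps of $M \sss L_\tau$. The plan is to combine Lemma~\ref{lem:big combsys, many tetrahedra} (which gives many tetrahedra $T_0,\ldots,T_{n-1}$ met by $\gamma$, with barycenters pairwise far apart in $\hat\tau$) with the "barrier" property of the surfaces $\Sigma_T^\circ$: the four-holed sphere $\Sigma_T^\circ$ separates the interior of $T$ from $\partial T$ inside $T$, so any path entering the core region of $T$ must cross $\Sigma_T^\circ$. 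The separation properties recorded in Corollary~\ref{Cor:intersection pattern} and the totally geodesic (hence incompressible, $\pi_1$-injective) structure from Lemma~\ref{Lem:totally geodesic CAT-1} are what upgrade "crossing" to "essential crossing."

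\begin{proof}
By Lemma~\ref{lem:big combsys, many tetrahedra}, after homotoping $\gamma$ (within $M \sss L_\tau$) we may assume $\gamma$ is a combinatorial loop in $\hat\tau$ meeting at least $n = \lfloor \combsys(\hat\tau)/16\rfloor$ tetrahedra $T_0,\ldots,T_{n-1}$ of $\tau$ whose barycenters $v_{T_i}$ are pairwise at distance at least $8$ in $\hat\tau$; in particular $\gamma$ passes through the barycenter $v_{T_i}$ of each $T_i$, so $\gamma$ enters the central region of each $T_i$ that is cut off by $\Sigma_{T_i}$. For each $i$, the surface $\Sigma_{T_i}$ is, by construction, obtained by pushing $T_i \sss \bigcup_{j}\Sigma_{F_j}$ into the interior of $T_i$, and hence separates a neighborhood of $v_{T_i}$ in $M \sss L_\tau$ from the complement of $T_i$. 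Since $v_{T_i}$ lies on $\gamma$ and $\gamma$ leaves $T_i$ (it is not null-homotopic in $M$, so it cannot stay inside a single tetrahedron), $\gamma$ crosses $\Sigma_{T_i}^\circ$. We claim this crossing is essential. Indeed, $\Sigma_{T_i}^\circ$ is incompressible and quasi-Fuchsian by Lemma~\ref{Lem:totally geodesic CAT-1}, so its preimage in $\widetilde{M \sss L_\tau} = \mathbb H^3$ is a disjoint union of quasi-planes each separating $\mathbb H^3$ into two half-spaces. Lift $\gamma$ to a line $\tilde\gamma$; a lift of $v_{T_i}$ sits in a lift $\widetilde{T_i}$ of $T_i$, on one side of the corresponding lift $\widetilde{\Sigma_{T_i}^\circ}$, while the endpoints of the portion of $\tilde\gamma$ outside $\widetilde{T_i}$ lie on the other side (using that translates of $\widetilde{T_i}$ by the deck group are either equal to or have disjoint interior from $\widetilde{T_i}$, and that the relevant lift of $\Sigma_{T_i}^\circ$ is the unique barrier between them). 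Hence $\tilde\gamma$ meets $\widetilde{\Sigma_{T_i}^\circ}$, and by the totally geodesic / convexity structure of the $\CAT(-1)$ metric — two convex totally geodesic planes meet in at most a geodesic, and $\tilde\gamma$ meets a totally geodesic plane in at most a point unless $\tilde\gamma$ lies in it, which is excluded since $\tilde\gamma$ exits $\widetilde{T_i}$ — this intersection is a single point. This exhibits an essential intersection point of $\gamma$ with $\Sigma_{T_i}^\circ$.

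It remains to check that the cusps of $\Sigma_{T_i}^\circ$ and $\Sigma_{T_j}^\circ$, for $i\neq j$, are contained in no common cusp of $M \sss L_\tau$. The four cusps of $\Sigma_{T_i}^\circ$ correspond to the four components of $L_\tau$ running through the four faces of $T_i$; more precisely, each cusp of $\Sigma_{T_i}^\circ$ is a loop around the arc of $L_\tau$ near a face-circle of $T_i$, and so the corresponding cusp of $M \sss L_\tau$ is the one around the link component $\Sigma_F$-encircles, i.e.\ the face-component $L_\tau \cap F$ for a face $F$ of $T_i$. Thus the set of cusps of $M \sss L_\tau$ "hit" by $\Sigma_{T_i}^\circ$ is exactly $\{$ the cusp around $L_\tau\cap F : F$ a face of $T_i\}$. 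A common cusp of $\Sigma_{T_i}^\circ$ and $\Sigma_{T_j}^\circ$ would therefore require $T_i$ and $T_j$ to share a face. But if $T_i$ and $T_j$ share a face (or even a vertex or edge), then $v_{T_i}$ and $v_{T_j}$ are at distance exactly $2$ in $\hat\tau$, contradicting that the barycenters chosen in Lemma~\ref{lem:big combsys, many tetrahedra} are at distance at least $8$. Hence no two of the $\Sigma_{T_i}^\circ$ share a cusp of $M\sss L_\tau$, and taking $\Sigma_{x_i}^\circ = \Sigma_{T_i}^\circ$ completes the proof.
\end{proof}

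\bigskip

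\noindent\textbf{Remarks on the plan.} The main obstacle I anticipate is the \emph{essentiality} argument in the first paragraph — turning the elementary fact that $\gamma$ must cross each barrier surface $\Sigma_{T_i}^\circ$ into the statement that the crossing is essential in the technical sense of a single transverse intersection point in the universal cover. The delicate point is identifying which lift of $\Sigma_{T_i}^\circ$ to use and arguing it is the unique barrier separating the relevant lift of $v_{T_i}$ from the rest of $\tilde\gamma$; this is where one genuinely uses that the surfaces are totally geodesic in the $\CAT(-1)$ metric (so that lifts are disjoint convex quasi-planes), rather than merely embedded. A slicker route, which I would also consider, is to avoid the universal cover entirely: observe that $\Sigma_{T_i}^\circ$ together with the punctured faces of $T_i$ bounds a product region $\Sigma_{T_i}^\circ \times [0,1]$ inside $T_i$ separating $v_{T_i}$ from $\partial T_i$, then appeal directly to the standard fact that a properly embedded incompressible surface $S$ in a hyperbolic 3-manifold is met essentially by any loop $\gamma$ that cannot be homotoped off $S$ — and $\gamma$ cannot be homotoped off $\Sigma_{T_i}^\circ$ precisely because it has nonzero intersection number with it modulo 2 (one essential entry into the central region of $T_i$ that it does not undo, since leaving $T_i$ forces it to exit through a face rather than back through $\Sigma_{T_i}^\circ$... which actually needs a little care, since $\gamma$ could oscillate). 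The cleanest formulation probably quantifies "enters the central region" via the $\CAT(-1)$ geodesic representative of $\gamma$: its geodesic realization in $M\sss L_\tau$ must still pass close to $v_{T_i}$ and hence cross the totally geodesic $\Sigma_{T_i}^\circ$ transversally in one point, at which stage essentiality is automatic from $\pi_1$-injectivity. The cusp-disjointness argument in the second paragraph is, by contrast, purely combinatorial and should be routine once the labeling of cusps of $\Sigma_T^\circ$ by faces of $T$ is made precise.
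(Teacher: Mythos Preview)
Your proposal has a genuine gap. The central claim---that $\Sigma_{T_i}^\circ$ separates a neighborhood of $v_{T_i}$ from the complement of $T_i$, forcing $\gamma$ to cross $\Sigma_{T_i}^\circ$---is false. The region of $T_i \sss L_\tau$ containing $v_{T_i}$ and cut off by $\Sigma_{T_i}^\circ$ also meets $\partial T_i$ along the four face-disks $\Sigma_{F_j}^\circ$ (recall that $\Sigma_{T_i}$ together with the $\Sigma_{F_j}$ forms the sphere bounding the inner ball). A path starting at $v_{T_i}$ can therefore exit $T_i$ through one of these face-disks, into the inner region of the adjacent tetrahedron, without touching $\Sigma_{T_i}^\circ$. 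Concretely, the combinatorial edge of $\hat\tau$ from $v_{T_i}$ to the barycenter of a face $F_j$ does exactly this. So you cannot conclude that $\gamma$ crosses $\Sigma_{T_i}^\circ$; it may cross only face-surfaces, and then your cusp argument (tailored to the $\Sigma_T^\circ$'s) no longer applies as written.

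There is also an internal tension in the essentiality step: you first homotope $\gamma$ to a combinatorial loop in $\hat\tau$, but then argue that a lift $\tilde\gamma$ meets a totally geodesic plane in at most one point---which requires $\tilde\gamma$ to be geodesic. These two normalizations of $\gamma$ are incompatible, and your own remarks flag this without resolving it.

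The paper handles both issues at once by a different route. It takes $\gamma$ to be the $\CAT(-1)$ geodesic from the outset; then every intersection with a totally geodesic $\Sigma_x^\circ$ is automatically transverse and essential. Lemma~\ref{lem:big combsys, many tetrahedra} still applies to this $\gamma$ (its statement concerns the set of tetrahedra met by an \emph{arbitrary} loop), giving $T_0,\ldots,T_{n-1}$ with far-apart barycenters. Since the geodesic $\gamma$ need not pass through $v_{T_i}$ or the inner ball of $T_i$, the paper does not try to force $\Sigma_{T_i}^\circ$. Instead it associates to each $T_i$ a larger family $\mathcal S_\tau^\circ(T_i)$---the face surfaces of $T_i$, the edge surfaces for all edges adjacent to a vertex of $T_i$, and the tetrahedron surfaces $\Sigma_{T'}^\circ$ for all $T'$ with $d_{\hat\tau}(v_{T_i},v_{T'})\le 2$---and verifies that every complementary component of their union meeting $T_i$ is contractible in $M$. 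The non-null-homotopic geodesic $\gamma$ must therefore cross some $\Sigma_{x_i}^\circ \in \mathcal S_\tau^\circ(T_i)$. The cusps of any such $\Sigma_{x_i}^\circ$ lie in link components contained in tetrahedra $T'$ with $d_{\hat\tau}(v_{T_i},v_{T'})\le 2$, and this is exactly why the separation bound $d_{\hat\tau}(v_{T_i},v_{T_j})\ge 8$ (rather than the $\ge 4$ your $\Sigma_T$-only argument would need) is invoked: it ensures the surfaces chosen for different $i$ have cusps in distinct cusps of $M \sss L_\tau$.
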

\begin{proof}
First observe that for each each surface $\Sigma_x^\circ$, the inclusion into $M$ induces the trivial homomorphism since it factors through the inclusion $\Sigma_x^\circ \to \Sigma_x$, and $\Sigma_x$ is either a  disk or is contained in a tetrahedron of $\tau$.

Suppose $\gamma$ is a loop in $M \sss L_\tau$ which is non-null-homotopic in $M$ (hence also in $M \sss L_\tau$).  From the previous paragraph, it follows that $\gamma$ cannot be homotoped to lie entirely inside any one of the surfaces $\Sigma_x^\circ$.  Since each component of $L_\tau$ is homotopically trivial in $M$, we see that $\gamma$ is non-peripheral (i.e.~not freely homotopic into a cusp) in $M \sss L_\tau$.  After a homotopy, we may therefore assume that $\gamma$ is geodesic with respect to the $\CAT(-1)$ metric, and we do so. 

We also assume that the isotopy class of $\Sigma_x^\circ$ is represented by a totally geodesic surface with respect to the $\CAT(-1)$ metric.  Therefore $\gamma$ intersects each $\Sigma_x^\circ$ transversely (possibly empty), and every intersection point is essential. 

Given a tetrahedron $T$, consider the family of surfaces $\mathcal S_\tau^\circ(T)$ consisting of surfaces $\Sigma_x^\circ$ for which $x$ is on of the following:
\begin{enumerate}
    \item A face of $T$;
    \item An edge adjacent to a vertex of $T$; or
    \item A tetrahedron $T'$ with $d_{\hat \tau}(v_T,v_{T'})\leq 2$.
\end{enumerate}

By inspection, the set of surfaces $\{ \Sigma_x \mid \Sigma_x^\circ \in \mathcal S_\tau^\circ(T)\}$
have the property that any component of the complement of their union which intersects $T$ is contractible.
Thus, if $\gamma$ intersects $T$, then it must have an essential intersection with some surface in $\mathcal S_\tau^\circ(T)$.

Now Lemma~\ref{lem:big combsys, many tetrahedra} implies that $\gamma$ intersects at least $n$ tetrahedra $T_0,\ldots,T_{n-1}$ for which the barycenters $v_{T_i}$ and $v_{T_j}$ are distance at least $8$ in $\hat \tau$ if $i \neq j$.  Let $\Sigma_{x_i}^\circ \in \mathcal S_\tau^\circ(T_i)$ be one of the surfaces essentially intersected by $\gamma$.  The cusps of $\Sigma_{x_i}^\circ$ are contained in cusps of $M \sss L_\tau$ that correspond to components of $L_\tau$ contained in the union of tetrahedra $T'$ with $d_{\hat \tau}(v_{T_i},v_{T'}) \leq 2$.  Consequently, the cusps of $M \sss L_\tau$ that contain the cusps of $\Sigma_i$ and $\Sigma_j$ are distinct if $i \neq j$.  This completes the proof.
\end{proof}

We are now ready for the proof of the main theorem.

\medskip

\noindent
{\bf Theorem~\ref{thm:unbounded essential systole}}  {\em \UnbddEss}

\medskip

\begin{proof}
By Proposition~\ref{Prop:big CT exist}, there exists a sequence of Cooper-Thurston triangulations $\tau_n$ such that $\combsys(\hat \tau_n) \geq 16n$.  The theorem is a consequence of the following.

\medskip

\noindent
{\bf Claim.}
The essential systoles of $L_{\tau_n}$ tend to infinity, or
\[ \lim_{n \to \infty} \esssys(L_{\tau_n}) = \infty .\]
\begin{proof}  Let $\gamma_n$ be closed geodesic in $M \sss L_{\tau_n}$ which is non-null-homotopic in $M$, and which realizes the essential systole of $L_{\tau_n}$. For each $n$ consider the surfaces 
\[ \Sigma_0^\circ(n),\ldots,\Sigma_{n-1}^\circ(n) \in \mathcal S_{\tau_n}^\circ,
\]
from Lemma~\ref{Lem:snaking through} that essentially intersect $\gamma_n$.  Since the surfaces in $\mathcal S_{\tau_n}^\circ$ are quasi-Fuchsian, we may homotope each $\Sigma_i^\circ(n)$ to a {\em pleated surface}; that is, the inclusion is homotopic to a $1$--Lipschitz map of a hyperbolic surface which is totally geodesic in the complementary regions of a geodesic lamination (see \cite{Thurston-Book}).   The geodesic $\gamma_n$ intersects each $\Sigma_i^\circ(n)$ nontrivially in some point $z_i(n)$.

Now we assume $\ell(\gamma_n) = \esssys(L_{\tau_n})$ does not tend to infinity with $n$ and derive a contradiction. 
This assumption implies that we may pass to a subsequence, and re-index so that for some $R >0$ we have $\ell(\gamma_n) < R$ for all $n$.

 Recall that for a hyperbolic manifold $N$ and $\epsilon > 0$, the {\em $\epsilon$--thin part} of $N$ is the set $N_{(0, \epsilon)} = \{x \in N \mid \text{injrad}(x) < \epsilon\}$, and $N_{[\epsilon, \infty)} = \{x \in N \mid \text{injrad}(x) > \epsilon\}$ is the {\em $\epsilon$--thick part} of $N$. By the Margulis Lemma, there is an $\epsilon_0$ (depending only on the dimension of $N$) so that if $\epsilon < \epsilon_0$, then $N_{(0,\epsilon)}$ is a disjoint union of horoball cusp regions and collar neighborhoods of geodesics of length less than $2 \epsilon$.   See \cite{Thurston-Book} or \cite{benedetti} for more details.  If $N$ is a hyperbolic $3$--manifold and $\epsilon < \epsilon_0$, then the collars of closed geodesics in $N_{(0,\epsilon)}$ are called {\em Margulis tubes}.  For a hyperbolic surface, $\Sigma$, and $\epsilon < \epsilon_0$, the diameter of the thick part $\Sigma_{[\epsilon, \infty)}$ is bounded above and below by constants that depend only on $\epsilon$ and the topology of $\Sigma$. 

The rest of the proof of the claim is divided into two case.\\

\noindent
{\bf Case 1.} There is no $\epsilon > 0$ so that $z_i(n)$ is contained in the $\epsilon$--thick part of $\Sigma_i^\circ(n)$ for all $i$ and $n$.\\

Passing to a further subsequence if necessary, we can assume that there is some $z_{i_n}(n)$ in the $\frac{1}n$--thin part of $\Sigma_{i_n}^\circ(n)$ for all $n$ and some $i_n$.  Since a pleated surface is a $1$-Lipschitz map into $M \sss L_{\tau}$, it maps $\epsilon$-thin parts in each $\Sigma_i^\circ(n)$ to $\epsilon$-thin parts in $M \sss L_{\tau_n}$.  In particular, $\gamma_n$ enters arbitrarily deep into the Margulis tube around the geodesic representative of a curve in $\Sigma_{i_n}^\circ(n)$; see \cite{BrooksMatelski,Meyerhoff}.  Since $\ell(\gamma_n) \leq R$, $\gamma_n$ must be entirely contained in this Margulis tube for all $n$ sufficiently large, and is thus homotopic into $\Sigma_{i_n}^\circ(n)$.  This is a contradiction since every loop in $\Sigma_{i_n}^\circ(n)$ is null-homotopic in $M$.\\

\noindent
{\bf Case 2.}  There exists $0 < \epsilon < \epsilon_0$ so that $z_i(n)$ is contained in the $\epsilon$--thick part of $\Sigma_i^\circ(n_i)$ for all $i$ and $n$.\\

Without loss of generality, we assume that $\epsilon$ is small enough so that distinct $\epsilon$--thin parts of $M \sss L_{\tau_n}$ are $1$--separated.  The $\epsilon$--thick part of $\Sigma_i^\circ(n)$ has uniformly bounded diameter since $\Sigma_i^\circ(n)$ has bounded Euler characteristic (it is a sphere with at most $11$ punctures).  Thus there is a boundary component of the $\epsilon$--thick part of $\Sigma_i^\circ(n)$ within some fixed distance $\delta$ from $z_i(n)$ for all $i$ and $n$.  For each $i$ and $n$, let $w_i(n)$ be a point in the $\epsilon$--thin part of $\Sigma_i^\circ(n)$ which is distance at most $\delta+1$ from $z_i(n)$.

For each $n$, we note that the points $w_0(n),\ldots,w_{n-1}(n)$ are in distinct thin parts of $M \sss L_{\tau_n}$.  This follows directly from the lemma if the thin parts are horoball cusps, and otherwise it follows by considering the totally geodesic representatives of the isotopy classes of the surfaces in the $\CAT(-1)$ metric: since the surfaces are pairwise disjoint, no geodesic in one is homotopic to a curve in another. 

Since $\{w_0(n),\ldots,w_{n-1}(n)\}$ are in distinct $\epsilon$--thin parts, which are $1$--separated, this set of points is also $1$--separated.
Lift $\gamma_n$ to a geodesic path $\widetilde \gamma_n$ of length at most $R$ in the universal cover $\mathbb H^3$ based at some point $p$.  For each $z_i(n) \in \gamma_n$, let $\widetilde z_i(n)$ be a point on $\widetilde \gamma_n$ that projects to $z_i(n)$ and $\widetilde w_i(n)$ be a point within distance $\delta+1$ of $\widetilde z_i(n)$ that projects to $w_i(n)$. See \Cref{fig:thickthin}.
Then $\{\widetilde w_0(n),\ldots,\widetilde w_{n-1}(n)\}$ is also a $1$--separated set of $n$ points.
On the other hand, for each $n$, these points are contained in a ball of fixed radius $R+\delta+1$ in $\mathbb H^3$.  This is a contradiction for $n$ sufficiently large since a $1$--separated set in such a ball contains at most $\frac{V(R+\delta+3/2)}{V(1/2)}$ points, where $V(r)$ is the volume of a hyperbolic ball of radius $r$.

\textbf{\begin{figure}[H]
    \centering
    \begin{tikzpicture}
    \node at (0,0) {\includegraphics[width=0.6\linewidth]{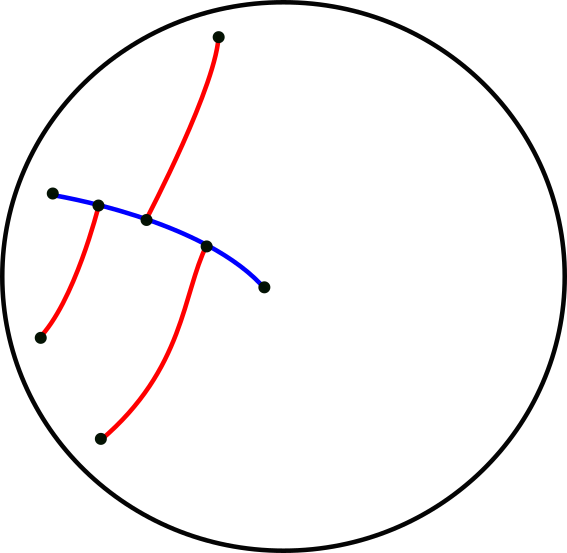}};
    \node at (-2.65,-.9) {$\widetilde{w}_i(n)$};
    \node at (-1.9,-2.4) {$\widetilde{w}_k(n)$};
    \node at (-1.8,.4) {$\widetilde{z}_j(n)$};
    \node at (-.3,3) {$\widetilde{w}_j(n)$};
    \node at (-2.3,1.25) {$\widetilde{z}_i(n)$};
    \node at (-.8,.75) {$\widetilde{z}_k(n)$};
    \node at (0,0) {$p$};
    \node at (-0.75,-0.1) {\color{blue}$\widetilde{\gamma_n}$};
    \node at (1.5,-2) {$B(p, R + \delta + 1)$};
    \end{tikzpicture}
    \caption{The points $\{\widetilde{w}_0(n), \widetilde{w}_1(n),\ldots, \widetilde{w}_{n - 1}(n)\}$ are all contained in the ball $B(p, R + \delta + 1) \subset \mathbb{H}^3$.}
    \label{fig:thickthin}
\end{figure}}

Since a uniform bound on $\esssys(L_{\tau_n})$ for any subsequence produces a contradiction, it follows that
\[ \lim_{n \to \infty} \esssys(L_{\tau_n}) = \infty,\]
as required.
\end{proof}

As already noted, the claim implies the theorem.
\end{proof}

Combining Theorem~\ref{thm:unbounded essential systole} with Theorem~\ref{thm:big essential systole filling} implies Theorem~\ref{thm:main}, as described Similarly, combining it with Theorem~\ref{thm:fullrank1} proves the following.

\begin{theorem} \label{thm:fullrank2}
For every $n > 0$ and every closed, orientable $3$--manifold $M$, there is a full rank--$n$ filling link $L \subset M$. \qed
\end{theorem}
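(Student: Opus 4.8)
The plan is to obtain this theorem by exactly the two-step combination that yielded Theorem~\ref{thm:main}, but with Theorem~\ref{thm:fullrank1} playing the role of Theorem~\ref{thm:big essential systole filling}. So first I would fix $n > 0$ and a closed, orientable $3$--manifold $M$, and split the argument according to $\rank(\pi_1(M))$.

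If $\rank(\pi_1(M)) \geq 1$, I would invoke Theorem~\ref{thm:fullrank1} to produce a constant $R = R(n) > 0$ such that every hyperbolic link in $M$ with essential systole greater than $R$ is full rank--$n$ filling, and then invoke Theorem~\ref{thm:unbounded essential systole} with $r = R$ to produce a hyperbolic link $L \subset M$ with $\esssys(L) > R$. By the defining property of $R$, this $L$ is full rank--$n$ filling, which settles this case.

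The remaining case $\rank(\pi_1(M)) = 0$ --- equivalently, $\pi_1(M)$ trivial --- lies outside the hypothesis of Theorem~\ref{thm:unbounded essential systole}, so it must be handled separately, but here the conclusion is immediate. For any link $L \subset M$ (for instance, the unknot) and any $f \colon \Gamma \to M \sss L$ for which $i \circ f \colon \Gamma \to M$ is full rank, fullness of rank forces $\rank(\pi_1(\Gamma)) = \rank\big((i \circ f)_*(\pi_1(\Gamma))\big) = 0$, since $(i\circ f)_*(\pi_1(\Gamma))$ is a subgroup of the trivial group $\pi_1(M)$; hence $\pi_1(\Gamma)$ is a free group of rank $0$, i.e.\ trivial, and $f_*$ is injective. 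So every link in such an $M$ is full rank--$n$ filling. I do not expect any genuine obstacle: all the content is carried by Theorems~\ref{thm:fullrank1} and~\ref{thm:unbounded essential systole}, and the only point that needs a moment's care is remembering to treat the simply connected case --- which the deduction of Theorem~\ref{thm:main} also tacitly sidesteps --- by the trivial observation above.
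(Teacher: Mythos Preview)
Your proposal is correct and follows the same route as the paper, which simply states that combining Theorem~\ref{thm:unbounded essential systole} with Theorem~\ref{thm:fullrank1} yields the result. Your explicit treatment of the simply connected case is a welcome bit of extra care that the paper's terse one-line deduction leaves implicit.
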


\section{Concluding Remarks}\label{sec:conclusion}

As indicated in the introduction, if one is only interested in proving the links $L_\tau$ can be taken to be filling, we can shorten the proof a little, and avoid using Thurston's Hyperbolization Theorem.  We now sketch how that can be done.
The locally $\CAT(-1)$ metric on $M \sss L_\tau$ has universal cover that is also $1$--hyperbolic (since triangles are thinner than their comparison triangles in $\mathbb H^2$ which are $1$--slim).  Consequently, we could apply a Theorem~\ref{thm:KW} to this metric, and deduce the same conclusion in Theorem~\ref{thm:big essential systole filling} (that is, sufficiently large locally $\CAT(-1)$ essential systole implies filling).  In this metric, the complement  $T \sss (L_\tau \cap T)$ in any tetrahedron of $T$ of $\tau$ has {\em exactly} the hyperbolic structure constructed in \S\ref{Sec:Tet tangle}.  Consequently, we can find $\varepsilon > 0$ so that the totally geodesic representatives of disjoint surfaces $\Sigma_x^\circ,\Sigma_y^\circ \in \mathcal S_\tau^\circ$ which do not share a cusp in $M \sss L_\tau$ have disjoint $\varepsilon$--neighborhoods.  If $\varepsilon$ is small enough, the $\varepsilon$--thin parts are precisely horoball cusp neighborhoods and are $\varepsilon$--separated.  Now if $\gamma$ is a geodesic in this metric realizing the locally $\CAT(-1)$ essential systole for $L_\tau$, and $n$ is as in Lemma~\ref{Lem:snaking through}, then $\gamma$ intersects at least $n$ pairwise disjoint surfaces $\Sigma_0^\circ,\ldots,\Sigma_{n-1}^\circ$, no two of which share a cusp.  Consequently, the length  $\gamma$ is at least $n\varepsilon$, which can be made arbitrarily large.

\subsection{Questions}
Freedman and Krushkal were motivated to ask Question~\ref{Q:FK} following a theme in $3$--manifold topology in which knots and links in $3$--manifolds are shown to be ``as robust" as embedded $1$--complexes, which they illustrate with results of Bing \cite{Bing}, Myers \cite{Myers}, Meigniez \cite{Meigniez}, and Freedman \cite{Freedman}.  They remark that their original motivation was to extend such results to higher dimensions, and they explicitly posed three higher dimensions analogues  \cite[Q1-Q3]{FreeKrus2023}.  With Theorem~\ref{thm:main} added to the list of $3$--manifold results, here we add a higher dimensional analogue to Freedman and Kruskal's list of questions. 

For any $n \geq 4$, there is a related notion of filling links in closed $n$--manifolds and an analogue of Question~\ref{Q:FK}.  Namely, given a smooth, closed $n$--manifold, $M$, a $1$--spine $f \colon \Gamma \to M$ is a minimal rank graph with $f_*: \pi_1(\Gamma) \to \pi_1(M)$ surjective.  A filling link is an embedded, codimension $2$ submanifold $i \colon L \to M$ such that for any $f \colon \Gamma \to M \sss L$ where $i \circ f$ is a $1$--spine, we have that $f_*$ is injective.

This leads us to following question: 

\begin{question}\label{Q:4dim}
    Which smooth, closed manifolds $M$ with $\mbox{dim}(M) \geq 4$ contain filling links?
\end{question}

One approach to construct such links might be to modify the sketch above, at least in some low dimensions and for manifolds admitting nice triangulations.  Specifically, can one find some explicit $\CAT(-1)$ metric coming from hyperbolic metrics on an $n$--simplex minus a tangle? We note that one cannot hope to find honest hyperbolic links in general, even for the case of dim$M=4$; see \cite{Saratch}. 

There were several questions proposed by Ian Biringer after originally circulating our preprint, which we include here.
The first asks for strengthening of Theorem~\ref{thm:main}.
\begin{question}[Biringer]
Does every closed, orientable $3$--manifold, $M$, contain a filling knot? Given $R>0$, does $M$ contain a knot with essential systole at least $R$?
\end{question}

Theorem~\ref{thm:fullrank2} suggests the following.
\begin{question}[Biringer] Given a closed, orientable $3$--manifold, $M$, is there a link $L$ that is full rank--$n$ filling, for all $n >0$?
\end{question}
The answer is vacuously yes for some special $3$--manifolds, e.g. those for which there is a uniform bound on the rank of a subgroup of the fundamental group (e.g.~the $3$--torus and spherical $3$--manifolds).

\bibliographystyle{alpha}
\bibliography{main}

\begin{thebibliography}{BMM04}

\bibitem[Ada85]{adams}
Colin~C. Adams.
\newblock Thrice-punctured spheres in hyperbolic {$3$}-manifolds.
\newblock {\em Trans. Amer. Math. Soc.}, 287(2):645--656, 1985.

\bibitem[AR00]{adamsreid}
Colin~C. Adams and Alan~W. Reid.
\newblock Systoles of hyperbolic {$3$}-manifolds.
\newblock {\em Math. Proc. Cambridge Philos. Soc.}, 128(1):103--110, 2000.

\bibitem[BCW04]{BachCoopWhit}
David Bachman, Daryl Cooper, and Matthew~E. White.
\newblock Large embedded balls and {H}eegaard genus in negative curvature.
\newblock {\em Algebr. Geom. Topol.}, 4:31--47, 2004.

\bibitem[BH99]{BridHaef1999}
Martin~R. Bridson and Andr\'e Haefliger.
\newblock {\em Metric spaces of non-positive curvature}, volume 319 of {\em Grundlehren der mathematischen Wissenschaften [Fundamental Principles of Mathematical Sciences]}.
\newblock Springer-Verlag, Berlin, 1999.

\bibitem[Bin58]{Bing}
R.~H. Bing.
\newblock Necessary and sufficient conditions that a {$3$}-manifold be {$S\sp{3}$}.
\newblock {\em Ann. of Math. (2)}, 68:17--37, 1958.

\bibitem[Bir09]{Biringer}
Ian Biringer.
\newblock Geometry and rank of fibered hyperbolic 3-manifolds.
\newblock {\em Algebr. Geom. Topol.}, 9(1):277--292, 2009.

\bibitem[BM82]{BrooksMatelski}
Robert Brooks and J.~Peter Matelski.
\newblock Collars in {K}leinian groups.
\newblock {\em Duke Math. J.}, 49(1):163--182, 1982.

\bibitem[BMM04]{BraMcCMei2004}
Noel Brady, Jon McCammond, and John Meier.
\newblock Bounding edge degrees in triangulated 3-manifolds.
\newblock {\em Proc. Amer. Math. Soc.}, 132(1):291--298, 2004.

\bibitem[BP92]{benedetti}
Riccardo Benedetti and Carlo Petronio.
\newblock {\em Lectures on hyperbolic geometry}.
\newblock Universitext. Springer-Verlag, Berlin, 1992.

\bibitem[BS11]{BiringerSouto}
Ian Biringer and Juan Souto.
\newblock A finiteness theorem for hyperbolic 3-manifolds.
\newblock {\em J. Lond. Math. Soc. (2)}, 84(1):227--242, 2011.

\bibitem[CT88]{CoopThur1988}
D.~Cooper and W.~P. Thurston.
\newblock Triangulating {$3$}-manifolds using {$5$} vertex link types.
\newblock {\em Topology}, 27(1):23--25, 1988.

\bibitem[FK23]{FreeKrus2023}
Michael Freedman and Vyacheslav Krushkal.
\newblock Filling links and spines in 3-manifolds.
\newblock {\em Comm. Anal. Geom.}, 31(10):2307--2333, 2023.
\newblock With appendix by Christopher J. Leininger and Alan W. Reid.

\bibitem[Fre22]{Freedman}
Michael Freedman.
\newblock Controlled {M}ather-{T}hurston theorems.
\newblock {\em EMS Surv. Math. Sci.}, 9(2):415--445, 2022.

\bibitem[FRS95]{frs}
Benjamin Fine, Gerhard Rosenberger, and Michael Stille.
\newblock Nielsen transformations and applications: a survey.
\newblock In {\em Groups---{K}orea '94 ({P}usan)}, pages 69--105. de Gruyter, Berlin, 1995.

\bibitem[KW03]{kapovichweidmann}
Ilya Kapovich and Richard Weidmann.
\newblock Nielsen methods and groups acting on hyperbolic spaces.
\newblock {\em Geom. Dedicata}, 98:95--121, 2003.

\bibitem[LL14]{lakelandleininger}
Grant~S. Lakeland and Christopher~J. Leininger.
\newblock Systoles and {D}ehn surgery for hyperbolic 3-manifolds.
\newblock {\em Algebr. Geom. Topol.}, 14(3):1441--1460, 2014.

\bibitem[McM92]{McMullen}
Curt McMullen.
\newblock Riemann surfaces and the geometrization of {$3$}-manifolds.
\newblock {\em Bull. Amer. Math. Soc. (N.S.)}, 27(2):207--216, 1992.

\bibitem[Mei21]{Meigniez}
Ga\"el Meigniez.
\newblock Quasicomplementary foliations and the {M}ather-{T}hurston theorem.
\newblock {\em Geom. Topol.}, 25(2):643--710, 2021.

\bibitem[Mey87]{Meyerhoff}
Robert Meyerhoff.
\newblock A lower bound for the volume of hyperbolic {$3$}-manifolds.
\newblock {\em Canad. J. Math.}, 39(5):1038--1056, 1987.

\bibitem[Mor84]{Morgan}
John~W. Morgan.
\newblock On {T}hurston's uniformization theorem for three-dimensional manifolds.
\newblock In {\em The {S}mith conjecture ({N}ew {Y}ork, 1979)}, volume 112 of {\em Pure Appl. Math.}, pages 37--125. Academic Press, Orlando, FL, 1984.

\bibitem[Mos73]{mostow}
G.~D. Mostow.
\newblock {\em Strong rigidity of locally symmetric spaces}, volume No. 78 of {\em Annals of Mathematics Studies}.
\newblock Princeton University Press, Princeton, NJ; University of Tokyo Press, Tokyo, 1973.

\bibitem[Mye82]{Myers}
Robert Myers.
\newblock Simple knots in compact, orientable {$3$}-manifolds.
\newblock {\em Trans. Amer. Math. Soc.}, 273(1):75--91, 1982.

\bibitem[Nie24]{nielsen}
Jakob Nielsen.
\newblock Die {I}somorphismengruppe der freien {G}ruppen.
\newblock {\em Math. Ann.}, 91(3-4):169--209, 1924.

\bibitem[Pra73]{prasad}
Gopal Prasad.
\newblock Strong rigidity of {${\bf Q}$}-rank {$1$} lattices.
\newblock {\em Invent. Math.}, 21:255--286, 1973.

\bibitem[Sar18]{Saratch}
Hemanth Saratchandran.
\newblock Finite volume hyperbolic complements of 2-tori and {K}lein bottles in closed smooth simply connected 4-manifolds.
\newblock {\em New York J. Math.}, 24:443--450, 2018.

\bibitem[Sta21]{stagner}
William Stagner.
\newblock 3-manifolds of rank 3 have filling links.
\newblock {\em arXiv:2110.02936}, 2021.

\bibitem[Thu86]{Thurston-hyp}
William~P. Thurston.
\newblock Hyperbolic structures on {$3$}-manifolds. {I}. {D}eformation of acylindrical manifolds.
\newblock {\em Ann. of Math. (2)}, 124(2):203--246, 1986.

\bibitem[Thu97]{Thurston-Book}
William~P. Thurston.
\newblock {\em Three-dimensional geometry and topology. {V}ol. 1}, volume~35 of {\em Princeton Mathematical Series}.
\newblock Princeton University Press, Princeton, NJ, 1997.

\bibitem[Whi02]{white}
Matthew~E. White.
\newblock Injectivity radius and fundamental groups of hyperbolic 3-manifolds.
\newblock {\em Comm. Anal. Geom.}, 10(2):377--395, 2002.

\end{thebibliography}

\end{document}